\providecommand{\U}[1]{\protect\rule{.1in}{.1in}}
\newtheorem{theorem}{Theorem}
{}
\newtheorem{corollary}{Corollary}
\newtheorem{definition}{Definition}
\newtheorem{lemma}{Lemma}
{}
\newtheorem{notation}{Notation}
\newtheorem{proposition}{Proposition}
\newtheorem{remark}{Remark}
\newtheorem{summary}{Summary}
\newenvironment{proof}[1][Proof]{\textbf{#1.} }{\ \rule{0.5em}{0.5em}}
\begin{document}

\title{Spectral Expansion for the Non-self-adjoint Differential Operators with the
Periodic Matrix Coefficients}
\author{O. A. Veliev\\{\small Dogus University, \ Istanbul, Turkey. e-mail: oveliev@dogus.edu.tr}}
\date{}
\maketitle

\begin{abstract}
In this paper we construct the spectral expansion for the non-self-adjoint
differential operators generated in the space of vektor functions by the
ordinary differential expression of arbitrary order with the periodic matrix
coefficients by using the essential spectral singularities, singular
quasimomenta and the series with parenthesis.

Key Words: Non-self-adjoint differential operators, Spectral expansion,
Periodic matrix coefficients.

AMS Mathematics Subject Classification: 34L05, 34L20.

\end{abstract}

\section{Introduction and Preliminary Results}

In this paper we construct a spectral expansion for the closed differential
operator $L$ generated in the space $L_{2}^{m}(-\infty,\infty)$ by the
differential expression
\begin{equation}
l(y)=y^{(n)}(x)+P_{2}\left(  x\right)  y^{(n-2)}(x)+P_{3}\left(  x\right)
y^{(n-3)}(x)+...+P_{n}(x)y, \tag{1}%
\end{equation}
where $n\geq2,$ $P_{\mathbb{\nu}}$ for $\mathbb{\nu}=2,3,...n$ are the
$m\times m$ matrix with the complex-valued entries $\left(  p_{\mathbb{\nu
},i,j}\right)  $ satisfying the conditions $p_{\mathbb{\nu},i,j}^{(n-v)}\in
L_{2}[0,1]$ and $p_{\mathbb{\nu},i,j}\left(  x+1\right)  =p_{\mathbb{\nu}%
,i,j}\left(  x\right)  $ for $i=1,2,...,m$ and $j=1,2,...,m.$ Moreover, we
assume that the eigenvalues $\mu_{1},\mu_{2},...,\mu_{m}$ of the matrix
\begin{equation}
C=\int_{0}^{1}P_{2}\left(  x\right)  dx \tag{2}%
\end{equation}
are simple. \ Here $L_{2}^{m}(I)$ denotes the space of the vector-valued
functions $f=\left(  f_{1},f_{2},...,f_{m}\right)  $ with the norm $\left\Vert
\cdot\right\Vert _{I}$ and inner product $(\cdot,\cdot)_{I}$ defined by%
\[
\left\Vert f\right\Vert _{I}^{2}=%
{\textstyle\int\limits_{I}}
\left\vert f\left(  x\right)  \right\vert ^{2}dx,\text{ }(f,g)_{I}=%
{\textstyle\int\limits_{I}}
\left\langle f\left(  x\right)  ,g\left(  x\right)  \right\rangle dx,
\]
where $\left\vert \cdot\right\vert $ and $\left\langle \cdot,\cdot
\right\rangle $ are the norm and inner product in $\mathbb{C}^{m},$ $f_{k}\in
L_{2}(I)$ for $k=1,2,...,m$ and the set $I$ is a subset of $\left(
-\infty,\infty\right)  .$ If $I=[0,1],$ then instead of $\left\Vert
\cdot\right\Vert _{I}$ and $(\cdot,\cdot)_{I}$ we write $\left\Vert
\cdot\right\Vert $ and $(\cdot,\cdot).$

Let us introduce some preliminary results and describe the scheme of the
paper. It is well-known that (see [4-6, 8]) the spectrum $\sigma(L)$ of $L$ is
the union of the spectra $\sigma(L_{t})$ of $L_{t}$ for $t\in\lbrack0,2\pi)$,
where $L_{t}$ is the differential operator generated in $L_{2}^{m}\left[
0,1\right]  $ by (1) and the boundary conditions
\begin{equation}
y\left(  1\right)  =e^{it}y\left(  0\right)  ,\text{ }y^{\prime}\left(
1\right)  =e^{it}y^{\prime}\left(  0\right)  ,\text{ }y^{\prime\prime}\left(
1\right)  =e^{it}y^{\prime\prime}\left(  0\right)  ,...,y^{(n-1)}\left(
1\right)  =e^{it}y^{(n-1)}\left(  0\right)  . \tag{3}%
\end{equation}
In [9] to obtain the asymptotic formulas for the operator $L_{t},$ the
operator $L_{t}(C)$ is taken for an unperturbed operator and $L_{t}-L_{t}(C)$
for a perturbation, where $L_{t}$ is denoted by $L_{t}(C)$ when $P_{2}(x)=C,$
$P_{3}(x)=0,...,P_{n}(x)=0.$ One can easily verify that the eigenvalues and
normalized eigenfunctions of $L_{t}(C)$ are%
\begin{equation}
\mu_{k,j}(t)=\left(  2\pi ki+ti\right)  ^{n}+\mu_{j}\left(  2\pi ki+ti\right)
^{n-2},\text{ \ }\Phi_{k,j,t}(x)=e(t)v_{j}e^{i\left(  2\pi k+t\right)  x}
\tag{4}%
\end{equation}
for $k\in\mathbb{Z}$ and $j=1,2,...,m,$ where $v_{1},v_{2},...,v_{m}$ are the
normalized eigenvectors of the matrix $C$ corresponding to the eigenvalues
$\mu_{1},\mu_{2},...,\mu_{m}$ respectively and
\[
\text{ }(e(t))=\left(
{\textstyle\int\nolimits_{0}^{1}}
\mid e^{itx}\mid^{2}dx\right)  ^{-\frac{1}{2}}.
\]
In this paper we essentially use the asymptotic formulas for the eigenvalues
and eigenfunctions of $L_{t}$ obtained in [9] for $t\in Q_{h},$ where
\begin{align*}
Q_{h}  &  =Q\backslash\left(  U_{h}(0)\cup U_{h}(\pi)\right)  ,\\
\text{ }Q  &  =\{t\in\mathbb{C}:-h<\operatorname{Re}t\leq2\pi-h,\text{
}|\operatorname{Im}t|\leq h\mathbb{\}},\\
U_{h}(z)  &  =\{t\in\mathbb{C}:\text{ }|t-z|<h\mathbb{\}}%
\end{align*}
and $0<h<\frac{1}{15\pi}.$ More precisely, we use the following results of [9]
formulated here as the following summary.

\begin{summary}
There exists a positive constant $N(h)$ such that if $t\in Q_{h},$ then

$(a)$ The large eigenvalues of $L_{t}$ consist of $m$ sequences
\[
\left\{  \lambda_{k,1}(t):\mid k\mid\geq N(h)\right\}  ,\left\{  \lambda
_{k,2}(t):\mid k\mid\geq N(h)\right\}  ,...,\left\{  \lambda_{k,m}(t):\mid
k\mid\geq N(h)\right\}
\]
satisfying the following formulas
\begin{equation}
\lambda_{k,j}(t)=\left(  2\pi ki+ti\right)  ^{n}+\mu_{j}\left(  2\pi
ki+ti\right)  ^{n-2}+O(k^{n-3}\ln|k|) \tag{5}%
\end{equation}
for $j=1,2,...,m$ as $|k|\rightarrow\infty$. This formula is uniform with
respect to $t$ in $Q_{h},$ that is, there exists a constant $c,$ independent
of $t,$ such that the term $O(k^{n-3}\ln|k|)$ in (5) satisfies the inequality%
\[
\mid O(k^{-1}\ln|k|)\mid<c\mid k^{-1}\ln|k|\mid
\]
for all $t\in Q_{h}$ and $\mid k\mid\geq N(h).$ Apart from the eigenvalues
$\lambda_{k,j}(t),$ defined in (5), there exist $K(h)$ eigenvalues of the
operator $L_{t}$ denoted by $\lambda_{k}(t)$ for $k=1,2,...,K(h),$ where
$K(h)=(2N(h)-1)m.$ Moreover, there exists a finite set $A_{h}$ such that all
eigenvalues of the operators $L_{t}$ for $t\in Q_{h}\backslash A_{h}$ are simple.

$(b)$ If $\mid k\mid\geq N(h),$ then $\lambda_{k,j}(t)$ is a simple eigenvalue
of $L_{t}$ and the corresponding normalized eigenfunction $\Psi_{k,j,t}(x)$
satisfies
\begin{equation}
\Psi_{k,j,t}(x)=e(t)v_{j}e^{i\left(  2\pi k+t\right)  x}+O(k^{-1}\ln|k|).
\tag{6}%
\end{equation}
This formula is uniform with respect to $t$ and $x$ in $Q_{h}$ and in $[0,1]$
respectively. In other words, there exists a constant $c,$ independent of $t$
and $x$ such that the term $O(k^{-1}\ln|k|)$ in (6) satisfies%
\[
\mid O(k^{-1}\ln|k|)\mid<c\mid k^{-1}\ln|k|\mid
\]
for all $t\in Q_{h},$ $x\in\lbrack0,1]$ and $\mid k\mid\geq N(h).$

$(c)$ The root functions of $L_{t}$ form a Riesz basis in $L_{2}^{m}(0,1)$.\ 

$(d)$ Let $L_{t}^{\ast}$ be the adjoint operator of $L_{t}$ and $X_{k,j,t}$ be
the eigenfunction of $L_{t}^{\ast}$ corresponding to the eigenvalue
$\overline{\lambda_{k,j}(t)}$ and satisfying $(X_{k,j,t},\Psi_{k,j,t})=1$,
where $\mid k\mid\geq N(h)$ and $t\in Q_{h}.$ Then $X_{k,j,t}(x)$ satisfies
the following formula
\begin{equation}
X_{k,j,t}(x)=u_{j}(e(t))^{-1}e^{i(2k\pi+\bar{t})x}+O(k^{-1}\ln|k|), \tag{7}%
\end{equation}
where $u_{j}$ is the eigenvector of \ $C^{\ast}$ corresponding to
$\overline{\mu_{j}}$ and satisfying $\left\langle u_{j},v_{j}\right\rangle
=1.$ This formula is uniform with respect to $t$ and $x$ in $Q_{h}$ and in
$[0,1]$ respectively. For each $k$ and $x$ the functions $\lambda_{k}(t),$
$\lambda_{k,j}(t)$ $X_{k,t}(x),\Psi_{k,t}(x),X_{k,j,t}(x)$ and $\Psi
_{k,j,t}(x)$ continuously depend on $t$ at $Q_{h}$ exsept at most a finite
number of points, where $\Psi_{k,t}$ and $X_{k,t}$ are respectively the
eigenfunctions of $L_{t}$ and $L_{t}^{\ast}$ corresponding to $\lambda_{k}(t)$
and $\overline{\lambda_{k}(t)}$ and satisfying $(X_{k,t},\Psi_{k,t})=1.$
\end{summary}

Then we use the vectorial\ form of the Gelfand transform [1] formulated as
follows. For every $f\in L_{2}^{m}(-\infty,\infty)$ there exists $f_{t}(x)$
such that
\begin{equation}
f(x)=\frac{1}{2\pi}%
{\displaystyle\int\limits_{[0,2\pi)}}
f_{t}(x)dt,\text{ }f_{t}(x)=\sum\limits_{k=-\infty}^{\infty}f(x+k)e^{-ikt}
\tag{8}%
\end{equation}
and%
\begin{equation}
\text{ }%
{\displaystyle\int\limits_{(-\infty,\infty)}}
\left\vert f(x)\right\vert ^{2}dx=\frac{1}{2\pi}%
{\displaystyle\int\limits_{[0,2\pi)}}
{\displaystyle\int\limits_{[0,1]}}
\left\vert f_{t}(x)\right\vert ^{2}dxdt,\text{ }f_{t}(x+1)=e^{it}f_{t}(x).
\tag{9}%
\end{equation}

We use Summary 1 and formula (8) as follows. By Summary 1$(c)$ the root
functions of $L_{t}$ for $t\neq0,\pi$ form a Riesz basis in $L_{2}^{m}(0,1)$.
Moreover, it follows from Summary 1$(a)$ that there exists a countable set $A$
such that the eigenvalues of the operators $L_{t}$ for $t\in Q\backslash A$
are simple. Without loss of generality, we may assume that $\left\{
0,\pi\right\}  \subset A.$ Thus if $t\in Q\backslash A,$ then the root
functions of $L_{t}$ consist of the eigenfunctions, denoted here and
occasionally, for the simplicity of the notations, by $\Psi_{k,t}$ for
$k=1,2,....$ Therefore we have the following decomposition
\begin{equation}
f_{t}=\sum_{k=1}^{\infty}(f_{t},X_{k,t})\Psi_{k,t}, \tag{10}%
\end{equation}
where $X_{k,t}$ is the eigenfunction of $L_{t}^{\ast}$ satisfying the equality
$\left(  \Psi_{k,t},X_{k,t}\right)  =1$ and $t\in Q\backslash A.$ Using (10)
in (8) we obtain
\begin{equation}
f(x)=\frac{1}{2\pi}%
{\displaystyle\int\limits_{[0,2\pi)}}
\sum_{k=1}^{\infty}(f_{t},X_{k,t})\Psi_{k,t}(x)dt. \tag{11}%
\end{equation}
Thus, in order to get the spectral expansion we need to consider the term by
term integration of the series in (11).

In Section 2 we prove that the integrand of (11) can be integrated term by
term if we consider the following curve of integration and the following
subset of $L_{2}^{m}(-\infty,\infty).$ It follows from Summary 1$(a)$ that the
accumulation points of the set $A$ are $0$ and $\pi.$ Moreover, the set $A\cap
Q_{h\text{ }}$ is finite. Denote the points of $A\cap B(h)$ by $t_{1}%
,t_{2},...,t_{v(h)}$, where $B(h)=[0,2\pi)\cap Q_{h\text{ }}=[h,\pi
-h]\cup\lbrack\pi+h,2\pi-h]$ and
\[
h<t_{1}<t_{2}<...<t_{p}<\pi-h<\pi+h<t_{p+1}<t_{p+2}<\cdot\cdot\cdot
<t_{v(h)}<2\pi-h.
\]
Instead of the interval $[0,2\pi)$ we choose the curve of integration so that
it passes over the points $0,t_{1},t_{2},...,t_{v(h)}$ and $\pi$. Namely,
instead of $[0,2\pi)$ we consider the curve $l_{+}(h,\varepsilon(h))$ joining
the points $-h$ and $2\pi-h$ and consisting of the intervals
\begin{align}
&  [h,t_{1}-\varepsilon(h)],[t_{1}+\varepsilon(h),t_{2}-\varepsilon
(h)],...,[t_{p-1}+\varepsilon(h),t_{p}-\varepsilon(h)],[t_{p}+\varepsilon
(h),\pi-h],\tag{12}\\
&  [\pi+h,t_{p+1}-\varepsilon(h)],[t_{p+1}+\varepsilon(h),t_{p+2}%
-\varepsilon(h)],...,[t_{v(h)}+\varepsilon(h),2\pi-h]\nonumber
\end{align}
and semicircles
\begin{align}
\gamma_{+}(0,h)  &  =\{\left\vert t\right\vert =h,\operatorname{Im}%
t>0\},\text{ }\gamma_{+}(\pi,h)=\{\left\vert t-\pi\right\vert
=h,\operatorname{Im}t>0\},\tag{13}\\
\gamma_{+}(t_{j},\varepsilon(h))  &  =\{\left\vert t-t_{j}\right\vert
=\varepsilon(h),\operatorname{Im}t>0\}\nonumber
\end{align}
for $j=1,2,...,v(h),$ where $\varepsilon(h)<\frac{h}{v(h)}.$ Note that instead
of $[0,2\pi)$ one can consider $(-h,2\pi-h],$ since $L_{t+2\pi}=L_{t}$.
Moreover, $h$ and $\varepsilon(h)$ can be chosen so that the curves
$l_{+}(h,\varepsilon)$ and $l_{-}(h,\varepsilon):=\{\lambda:\overline{\lambda
}\in l_{+}(h,\varepsilon)\}$ do not contain the points of the set $A$ and the
disk $\{\left\vert t-t_{j}\right\vert <\varepsilon(h)\}$ contains only the
point $t_{j}$ from $A,$ where $j=1,2,...,v(h).$ Finally, note that
$\varepsilon(h)$ can be chosen so that the semicircles (13) have no common points.

In Section 2 we prove that the integrand of (11) can be integrated term by
term if we replace the integration set $[0,2\pi)$ by the curve $l_{\pm
}(h,\varepsilon(h))$. However, equality obtained from (11) by changing
$[0,2\pi)$ to $l_{\pm}(h,\varepsilon)$ holds if and only if
\[%
{\displaystyle\int\limits_{\lbrack0,2\pi)}}
f_{t}(x)dt=%
{\displaystyle\int\limits_{l_{\pm}(h,\varepsilon)}}
f_{t}(x)dt
\]
(see (8) and (10)). Therefore, we consider the functions $f\in L_{2}%
^{m}(-\infty,\infty)$ for which the last equality holds in sense of (14) (see
below). For this we consider the following subset $W$ of $L_{2}^{m}%
(-\infty,\infty)$.

\begin{definition}
Let $W$ be the set of $f\in L_{2}^{m}(-\infty,\infty)$ such that $f=$
$f^{+}+f^{-}$, where $f^{+}\in W^{+},$ $f^{-}\in W^{-}$ and the sets $W^{+}$
and $W^{-}$ are defined as follows. Denote by $D_{+}(h,\varepsilon)$ the open
set enclosed by $l_{+}(h,\varepsilon)$ and $[-h,2\pi-h],$ where $l_{+}%
(h,\varepsilon)$ is the union of the intervals (12) and semicircles (13),
however $\varepsilon$ does not depend on $h.$ Let $W^{+}$ be the set of
$f^{+}\in L_{2}(-\infty,\infty)$ for which there exists $\varepsilon>0$ such
that the Gelfand transform defined by%
\[
f_{t}^{+}(x)=\sum_{k=-\infty}^{\infty}f^{+}(x+k)e^{-ikt}%
\]
is continuous function of two variables $\left(  x,t\right)  $ in
\[
\left(  \overline{D_{+}(h,\varepsilon)}\cup\lbrack-h,2\pi-h]\right)
\times\lbrack0,1]
\]
and analytic in $D_{+}(h,\varepsilon)$ with respect to $t$ for almost all
$x\in\lbrack0,1].$ Replacing $D_{+}(h,\varepsilon)$ by
\[
D_{-}(h,\varepsilon)=\{\lambda:\overline{\lambda}\in D_{+}(h,\varepsilon)\}
\]
and repeating the definition of \ $W^{+}$ we get the definition of $W^{-}.$
\end{definition}

Note that the set $W$ is a sufficiently large subset of $L_{2}^{m}%
(-\infty,\infty).$ For example, $W$ contains the set of $f\in L_{2}%
^{m}(-\infty,\infty)$ such that $f$ is continuous on $[k,k+1)$ for each
$k\in\mathbb{Z}$ and the series
\[
\sum_{k=-\infty}^{\infty}\mid f(x+k)\mid
\]
converges uniformly in $[0,1]$. This can be easily verified as follows. Define
$f^{+}$ and $f^{-}$ by%
\[
f^{+}(x)=f(x),\text{ if }x\in(-\infty,0),\text{ }f^{+}(x)=0\text{ if }%
x\in\lbrack0,\infty)
\]
and%
\[
f^{-}(x)=f(x),\text{ if }x\in\lbrack0,\infty),\text{ }f^{-}(x)=0\text{ if
}x\in(-\infty,0).
\]
Then $f=$ $f^{+}+f^{-}$ and $f^{\pm}\in W^{\pm}$, since the last series
converges uniformly in $[0,1]$.

It follows from Definition 1 that if $f\in W,$ then
\begin{equation}
\int\limits_{l_{+}(h,\varepsilon)}f_{t}^{+}(x)dt=\int\limits_{(-h,2\pi
-h]}f_{t}^{+}(x)dt,\text{ }\int\limits_{l_{-}(h,\varepsilon)}f_{t}%
^{-}(x)dt=\int\limits_{(-h,2\pi-h]}f_{t}^{-}(x)dt. \tag{14}%
\end{equation}
By the notation of Summary 1 for $t\in Q_{h}\backslash A$ the root functions
of $L_{t}$ consist of the eigenfunctions $\Psi_{k,t}$ for $k=1,2,...,K(h)$ and
$\Psi_{k,j,t}$ for $j=1,2,...,m$ and $|k|\geq N(h).$ Therefore we have
\begin{equation}
f_{t}^{+}(x)=\sum_{k=1}^{K(h)}(f_{t}^{+},X_{k,t})\Psi_{k,t}(x)+\sum
_{\left\vert k\right\vert \geq N(h)}^{{}}\sum_{j=1}^{m}(f_{t}^{+}%
,X_{k,j,t})\Psi_{k,j,t}(x) \tag{10a}%
\end{equation}
for all $t\in Q_{h}\backslash A.$ Note that equality (10a) is (10) rewritten
in the notation of Summary 1. This equality with (14), (8) and the following
obvious equality $f_{t+2\pi}(x)=f_{t}(x)$ implies that
\begin{equation}
f^{+}(x)=\frac{1}{2\pi}\int\limits_{l_{+}(\varepsilon,h)}%
{\displaystyle\sum\limits_{k=1}^{K(h)}}
(f_{t}^{+},X_{k,t})\Psi_{k,t}(x)+\sum_{\left\vert k\right\vert \geq N(h)}^{{}%
}\sum_{j=1}^{m}(f_{t}^{+},X_{k,t})\Psi_{k,t}(x)dt. \tag{15}%
\end{equation}
In the same way we obtain
\begin{equation}
f^{-}(x)=\frac{1}{2\pi}\int\limits_{l_{-}(\varepsilon,h)}%
{\displaystyle\sum\limits_{k=1}^{K(h)}}
(f_{t}^{-},X_{k,t})\Psi_{k,t}(x)+\sum_{\left\vert k\right\vert \geq N(h)}^{{}%
}\sum_{j=1}^{m}(f_{t}^{-},X_{k,t})\Psi_{k,t}(x)dt. \tag{16}%
\end{equation}

In Section 2 using (15) and (16) we prove that
\begin{equation}
f^{+}=\frac{1}{2\pi}\left(  \sum_{k=1}^{K(h)}%
{\displaystyle\int\limits_{l_{+}(h,\varepsilon)}}
(f_{t}^{+},X_{k,t}),\Psi_{k,t}dt+\sum_{\left\vert k\right\vert \geq N(h)}^{{}%
}\sum_{j=1}^{m}%
{\displaystyle\int\limits_{l_{+}(h,\varepsilon)}}
(f_{t}^{+},X_{k,j,t})\Psi_{k,j,t}dt\right)  \tag{17}%
\end{equation}
and%
\begin{equation}
f^{-}=\frac{1}{2\pi}\left(  \sum_{k=1}^{K(h)}%
{\displaystyle\int\limits_{l_{-}(h,\varepsilon)}}
(f_{t}^{-},X_{k,t}),\Psi_{k,t}dt+\sum_{\left\vert k\right\vert \geq N(h)}^{{}%
}\sum_{j=1}^{m}%
{\displaystyle\int\limits_{l_{-}(h,\varepsilon)}}
(f_{t}^{-},X_{k,j,t})\Psi_{k,j,t}dt\right)  . \tag{18}%
\end{equation}
for all $f\in W.$ In other words, we solve the term by term integration
problem for the curve $l_{\pm}(h,\varepsilon).$ Note that in papers [9] such
term by term integration problem for the operator $L$ was solved only for the
compactly supported continuous function. Moreover, in Section 2 using (17) and
(18) we prove the following.

\begin{theorem}
If $f\in W,$ then for each $p>0$ and $\delta>0$ there exists $h>0$ such that
\[
\left\Vert f-\frac{1}{2\pi}\left(  \sum_{k=1}^{K(h)}%
{\displaystyle\int\limits_{r(h,\varepsilon)}}
(f_{t},X_{k,t})\Psi_{k,t}dt+\sum_{\left\vert k\right\vert \geq N(h)}^{{}}%
\sum_{j=1}^{m}%
{\displaystyle\int\limits_{r(h,\varepsilon)}}
(f_{t},X_{k,j,t})\Psi_{k,j,t}dt\right)  \right\Vert _{(-p,p)}<\delta,
\]
where $r(h,\varepsilon(h))$ is the real part of $l_{\pm}(h,\varepsilon(h)),$
that is, is the union of the intervals in (12).
\end{theorem}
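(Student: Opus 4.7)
The plan is to derive the theorem directly from (17) and (18) by contour deformation and a length estimate on the resulting semicircular arcs. First, I would add (17) and (18) to obtain
\[
f = f^++f^- = \frac{1}{2\pi}\left[\int_{l_+(h,\varepsilon)}S_t^+(x)\,dt+\int_{l_-(h,\varepsilon)}S_t^-(x)\,dt\right],
\]
where $S_t^\pm(x)$ denotes the parenthesized series on the right-hand side of (17) (resp.\ (18)). Each contour splits as $l_\pm(h,\varepsilon)=r(h,\varepsilon)\cup\gamma_\pm$, with $\gamma_\pm$ the union of the upper (resp.\ lower) semicircles in (13). By Summary 1(c) the root functions form a Riesz basis in $L_2^m(0,1)$ for real $t\in Q_h\setminus A$, so $S_t^\pm=f_t^\pm$ on $r(h,\varepsilon)$ in $L^2(0,1)$; linearity together with $f_t=f_t^++f_t^-$ shows that the two integrals over $r(h,\varepsilon)$ combine to give exactly the expression subtracted from $f$ inside the norm in the theorem. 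Hence the error reduces to
\[
E(x):=\frac{1}{2\pi}\int_{\gamma_+}S_t^+(x)\,dt+\frac{1}{2\pi}\int_{\gamma_-}S_t^-(x)\,dt,
\]
and it suffices to prove $\|E\|_{(-p,p)}<\delta$ for $h$ small.

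The second step is to identify $S_t^\pm(x)$ with $f_t^\pm(x)$ on the semicircles. By Definition 1, $f_t^\pm$ is analytic in $t$ on $D_\pm(h,\varepsilon)$ and continuous on $\overline{D_\pm(h,\varepsilon)}\times[0,1]$. The identity $S_t^\pm=f_t^\pm$ is known on the real line, and both sides extend to holomorphic functions of $t$ on $D_\pm$: for $S_t^\pm$ this follows from the analytic dependence of $\lambda_{k,j}(t),\Psi_{k,j,t},X_{k,j,t}$ on $t$ (Summary 1(d)) combined with the uniform asymptotics (5)--(7) that ensure the tail series converges uniformly on compact subsets of $D_\pm$, while for $f_t^\pm$ this is Definition 1. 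Analytic continuation therefore gives $S_t^\pm=f_t^\pm$ on $\gamma_\pm$, and
\[
E(x)=\frac{1}{2\pi}\int_{\gamma_+}f_t^+(x)\,dt+\frac{1}{2\pi}\int_{\gamma_-}f_t^-(x)\,dt.
\]

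For the final estimate, the quasi-periodicity $f_t^\pm(x+1)=e^{it}f_t^\pm(x)$, together with the uniform bound $|f_t^\pm(y)|\leq M$ on $\overline{D_\pm(h,\varepsilon)}\times[0,1]$ coming from the continuity in Definition 1, yields $\|f_t^\pm\|_{(-p,p)}\leq\sqrt{2p}\,Me^{(p+1)h}$ for $t\in\gamma_\pm$, since $|\operatorname{Im}t|\leq h$. Minkowski's integral inequality then gives
\[
\|E\|_{(-p,p)}\leq\frac{|\gamma_+|+|\gamma_-|}{2\pi}\sqrt{2p}\,Me^{(p+1)h}.
\]
The total arc length satisfies $|\gamma_+|+|\gamma_-|\leq 2(2\pi h+\pi v(h)\varepsilon(h))\leq 6\pi h$ by the choice $\varepsilon(h)<h/v(h)$, so $\|E\|_{(-p,p)}\leq 3h\sqrt{2p}\,Me^{(p+1)h}\to 0$ as $h\to 0$, which is $<\delta$ for $h$ small enough. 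The main obstacle is the analytic-continuation step $S_t^\pm=f_t^\pm$ on $\gamma_\pm$, which requires verifying that the parenthesized series in (17)--(18) defines a holomorphic function of $t$ on $D_\pm$; once this is in hand, the length bound and the Minkowski estimate are routine.
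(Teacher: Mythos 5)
Your proposal is correct in substance and arrives at the same $O(h)$ estimate as the paper, but it detours through the semicircular arcs, and that detour contains the only shaky step. The paper's own proof never touches the arcs: by (8) and periodicity, $f-\frac{1}{2\pi}\int_{r(h,\varepsilon)}f_t\,dt=\frac{1}{2\pi}\int_{[0,2\pi]\setminus r(h,\varepsilon)}f_t\,dt$, the excluded real set has measure at most $6h$ (two intervals of length $2h$ plus intervals of total length $2v(h)\varepsilon(h)<2h$), $|f_t(x)|<M$ by Definition 1, and Corollary 1 (formula (40)) identifies $\int_{r(h,\varepsilon)}f_t\,dt$ with the term-by-term sum in the statement; taking $h<\delta/(6M)$ finishes. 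Your route instead reduces the error to $\int_{\gamma_\pm}S_t^\pm\,dt$ and so must identify $S_t^\pm$ with $f_t^\pm$ at complex $t$. You propose analytic continuation, citing Summary 1(d) for ``analytic dependence'' of the eigenelements; but Summary 1(d) asserts only continuity in $t$ off a finite set, so holomorphy of the parenthesized series on $D_\pm$ is not available from the stated results, and the ``main obstacle'' you flag is a genuine gap as written. Fortunately it is also unnecessary: the expansion (10a)--(10b) is asserted for all $t\in Q_h\setminus A$, a complex region containing the semicircles (which are chosen to avoid $A$), so $S_t^\pm=f_t^\pm$ on $\gamma_\pm$ holds directly; alternatively, (14) converts $\int_{\gamma_\pm}f_t^\pm\,dt$ back into an integral over the real excluded intervals and you recover the paper's argument verbatim. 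With either repair, your length-times-sup bound ($6\pi h$ times the supremum of $|f_t^\pm|$ on $\overline{D_\pm(h,\varepsilon)}\times[0,1]$, which Definition 1 provides) closes the proof.
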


We say that Theorem 1 is the approximation for the spectral expansion, since
$r(h,\varepsilon(h))\rightarrow\lbrack0,2\pi)$ as $h\rightarrow0$. Similarly
formulas (17) and (18) can be considered as approximations for the spectral expansion.

In Section 3 using (17), (18) and the notion essential spectral singularity
(ESS) defined in [10, 12] for the scalar case $m=1$ we construct the spectral
expansion for the operator $L.$ To get the spectral expansion in the term of
$t$ from (17) and (18) we need replace the integrals over $l_{\pm}%
(\varepsilon,h)$ by the integrals over $(-h,2\pi-h]$ and consider the
integrability of $(f_{t},X_{k,t}),\Psi_{k,t}$ over $(-h,2\pi-h]$. It is
possible that the function $(f_{t},X_{k,t}),\Psi_{k,t}$ for some value of
$k$\ is not integrable on $(-h,2\pi-h],$ since
\[
(f_{t},X_{k,t})\Psi_{k,t}=\frac{1}{\alpha_{k}(t)}(f_{t},\Psi_{k,t}^{\ast}%
)\Psi_{k,t}%
\]
and $\alpha_{k}(t)=0$ if there exists an associated function corresponding to
$\Psi_{k,t}$ and hence $\frac{1}{\alpha_{k}}$ may become nonintegrable, where
$\alpha_{k}(t)=(\Psi_{k,t}^{\ast},\Psi_{k,t})$ and $\Psi_{k,t}^{\ast}$ is a
normalized eigenfunction of $L_{t}^{\ast}$ corresponding to the eigenvalue
$\overline{\lambda_{k}(t)}.$ Therefore we introduce the following notions.

\begin{definition}
A number $\lambda_{0}\in\sigma(L)$ is said to be an essential spectral
singularity (ESS) of $L$ if there exist $t_{0}\in(-h,2\pi-h]$ and
$k\in\mathbb{Z}$ such that $\lambda_{0}=\lambda_{k}(t_{0})$ and $\frac
{1}{\alpha_{k}}$ is not integrable over $(t_{0}-\delta,t_{0}+\delta)$ for all
$\delta>0.$ Then $t_{0}$ is called a singular quasimomentum (SQ).
\end{definition}

If $\lambda_{k}(t_{0})$ is a simple eigenvalue, then $\alpha_{k}(t)\neq0$ and
$\frac{1}{\alpha_{k}}$ is integrable over $(t_{0}-\delta,t_{0}+\delta)$ for
some $\delta>0.$ Therefore ESS is a multiple eigenvalue and the set of
singular quasimomenta is a subset of $A.$ Thus if $\lambda_{k}(t_{j})$ is an
ESS for some $k,$ then the function $(f_{t},X_{k,t})\Psi_{k,t}(x),$ in
general, is not integrable on $(t_{j}-\delta,t_{j}+\delta)$ for all $\delta>0$
and for some $f\in L_{2}^{m}(-\infty,\infty)$ and $x\in\lbrack0,1].$
Fortunately, the sum
\[
\sum_{s\in\mathbb{T}(k,j)}(f_{t},X_{s,t})\Psi_{s,t},
\]
where $\mathbb{T}(k,j):=\left\{  s:\lambda_{s}(t_{j})=\lambda_{k}%
(t_{j})\right\}  ,$ is integrable over $(t_{j}-\delta,t_{j}+\delta)$ for some
$\delta>0.$ Thus we need to compound together some nonintegrable summand of
the series (10). This situation requires to use the brackets in the spectral
expansion. In Section 3 we construct spectral expansion with brackets for
every $f\in W.$ Note that if $m$ is an odd number then $A$ is an finite set
and formulas (5)-(7) hold uniformly with respect to $t$ and $x$ in $Q$ and in
$[0,1]$ respectively. Therefore it is easy to get the term by term integration
in (11). This simpler case is investigated in [11], where we proved that if
$m$ is an odd number then the operator $L$ is an asymptotically spectral
operator and has an elegant spectral expansion. \ Here we consider the
complicated case when $m$ is an even number, since in this case there may
exist infinitely many singular quasimamenta and the set of indices $(k,j)$ for
which $(f_{t}^{\pm},X_{k,j,t})\Psi_{k,j,t}(x)$ are not integrable may contain
infinitely many elements. Moreover, this case is interesting, since it
contains the Schr\"{o}dinger operator with a matrix potential. The results of
this paper are new for the scalar case $m=1$ too, since in [12] we considered
essentially small subset of $L_{2}(-\infty,\infty)$ than $W$ $.$ Besides, in
this paper we minimize the number of terms in the bracket used in the spectral
expansion. This is explained in detail in the conclusion Section 4. Finally
note that this paper together with the papers [10-12] gives the complete
solution of the spectral expansion problem for the ordinary differential
operators with the periodic coefficients. Therefore, this paper can be
considered as continuation and completion of the papers [10-12].

\section{On the Approximations for the Spectral Expansion}

In this section we prove that for $f$ $\in W$ equalities (17) and (18) hold.
Moreover, we give the proof of Theorem 1. To prove (17) and (18) it is enough
to prove the term by term integrations in (15) and (16) respectively. We prove
(17). The proof of (18) is the same. For this we consider the remainder
\begin{equation}
\sum_{\left\vert k\right\vert \geq s}^{{}}\sum_{j=1}^{m}(f_{t}^{+}%
,X_{k,j,t})\Psi_{k,j,t}(x) \tag{19}%
\end{equation}
of the series
\begin{equation}
\sum_{\left\vert k\right\vert \geq N(h)}^{{}}\sum_{j=1}^{m}(f_{t}%
^{+},X_{k,j,t})\Psi_{k,j,t}(x), \tag{20}%
\end{equation}
where $s\geq N(h)$. In the forthcoming inequalities we denote by $c_{1}%
,c_{2},...$ the positive constants that do not depend on $t$. They will be
used in the sense that there exists $c_{i}$ such that the inequality holds.

\begin{lemma}
If $f\in W,$ then
\begin{equation}
\left\Vert \sum_{\left\vert k\right\vert \geq s}^{{}}\sum_{j=1}^{m}(f_{t}%
^{+},X_{k,j,t})\Psi_{k,j,t}\right\Vert ^{2}\leq c_{1}\left(  \sum_{\left\vert
k\right\vert \geq s}^{{}}\sum_{j=1}^{m}\mid(e^{-itx}f_{t}^{+},e_{j}e^{i2\pi
kx})\mid^{2}+\frac{\left\Vert f_{t}^{+}\right\Vert ^{2}}{\sqrt{s}}\right)
\tag{21}%
\end{equation}
for all $s\geq N(h),$ $t\in l_{+}(\varepsilon,h),$ where $\left\{
e_{j}:j=1,2,...,m\right\}  $\ is a standard basis of $\mathbb{C}^{m}$.
\end{lemma}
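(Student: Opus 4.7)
The plan is to combine three ingredients: the Riesz basis property of Summary~1$(c)$ with a Riesz constant that is uniform for $t$ on the curve $l_{+}(h,\varepsilon)$, the asymptotic expansion (7) for the adjoint eigenfunctions $X_{k,j,t}$, and a bookkeeping step that converts inner products involving the eigenvectors $u_{j}$ of $C^{\ast}$ into inner products against the standard basis vectors $e_{j}$.

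First I would invoke the uniform Riesz basis bound to get
\[
\Big\Vert \sum_{|k|\geq s}\sum_{j=1}^{m}(f_{t}^{+},X_{k,j,t})\Psi_{k,j,t}\Big\Vert^{2}\;\leq\;c_{0}\sum_{|k|\geq s}\sum_{j=1}^{m}\bigl|(f_{t}^{+},X_{k,j,t})\bigr|^{2},
\]
which is where the Riesz basis property of the $\{\Psi_{k,j,t}\}$ enters. Next, using (7), I split
\[
X_{k,j,t}(x)=u_{j}(e(t))^{-1}e^{i(2k\pi+\bar{t})x}+R_{k,j,t}(x),\qquad \|R_{k,j,t}\|\leq c\,|k|^{-1}\ln|k|,
\]
and bound each term $|(f_{t}^{+},X_{k,j,t})|^{2}$ by twice the square of each piece.

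For the remainder piece, Cauchy--Schwarz gives $|(f_{t}^{+},R_{k,j,t})|^{2}\leq c\,\|f_{t}^{+}\|^{2}|k|^{-2}\ln^{2}|k|$, and the tail estimate $\sum_{|k|\geq s}|k|^{-2}\ln^{2}|k|=O(s^{-1}\ln^{2}s)=O(s^{-1/2})$ produces exactly the $\|f_{t}^{+}\|^{2}/\sqrt{s}$ summand in (21). For the main piece, expanding $u_{j}=\sum_{i}(u_{j})_{i}\,e_{i}$ in the standard basis and using $\overline{e^{i(2k\pi+\bar{t})x}}=e^{-i(2k\pi+t)x}$ together with the definition of the inner product yields
\[
\bigl(f_{t}^{+},u_{j}(e(t))^{-1}e^{i(2k\pi+\bar{t})x}\bigr)=\overline{(e(t))^{-1}}\sum_{i=1}^{m}\overline{(u_{j})_{i}}\,\bigl(e^{-itx}f_{t}^{+},\,e_{i}e^{i2k\pi x}\bigr).
\]
Since $|(e(t))^{-1}|$ and the coordinates $(u_{j})_{i}$ are bounded uniformly for $t\in l_{+}(h,\varepsilon)$ (the vectors $u_{j}$ being fixed), Cauchy--Schwarz in the $i$-index and summation over the finitely many $j$ give the first sum on the RHS of (21), up to a constant absorbed into $c_{1}$.

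The main obstacle will be justifying that the Riesz basis constant $c_{0}$ is uniform in $t$ on $l_{+}(h,\varepsilon)$, which is only asserted pointwise in Summary~1$(c)$. I would establish this by a Bari-type comparison: formulas (5)--(7) say that, uniformly in $t\in Q_{h}$, the high-index part $\{\Psi_{k,j,t}:|k|\geq N(h),\,j\leq m\}$ is quadratically close to the orthonormal family $\{e(t)v_{j}e^{i(2k\pi+t)x}\}$, while the low-index part $\{\Psi_{k,t}:k\leq K(h)\}$ consists of finitely many vectors depending continuously on $t$ off the finite set $A\cap l_{+}$. Compactness of $l_{+}(h,\varepsilon)$ (which by construction avoids $A$) then converts pointwise Riesz basisness into a uniform bound.
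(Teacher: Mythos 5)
Your proposal is correct and follows essentially the same route as the paper: the paper proves (21) by combining a coefficient bound (its inequality (22)), obtained exactly as you do from formula (7) after expanding the $u_j$'s in the standard basis and summing the $O(k^{-1}\ln|k|)$ tails to get the $\left\Vert f_{t}^{+}\right\Vert ^{2}/\sqrt{s}$ term, with a Bessel-type bound (its inequality (23)), which it establishes by precisely the quadratic-closeness computation you describe --- writing $\Psi_{k,j,t}=e(t)v_{j}e^{i(2\pi k+t)x}+h_{k,j,t}$ with $\left\Vert h_{k,j,t}\right\Vert =O(k^{-1}\ln|k|)$ and applying the Schwarz inequality to the remainder sum. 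The only cosmetic difference is that the paper never invokes a uniform Riesz constant or compactness of $l_{+}(h,\varepsilon)$ for this lemma; your ``Bari-type comparison'' for the high-index tail is already the entire argument, and the low-index family $\Psi_{k,t}$ plays no role here.
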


\begin{proof}
We prove (21) by showing that the inequalities
\begin{equation}
\sum_{\left\vert k\right\vert \geq s}^{{}}\sum_{j=1}^{m}\mid(f_{t}%
^{+},X_{k,j,t})\mid^{2}\leq c_{2}\left(  \sum_{\left\vert k\right\vert \geq
s}^{{}}\sum_{j=1}^{m}\mid(e^{-itx}f_{t}^{+},e_{j}e^{i2\pi kx})\mid^{2}%
+\frac{1}{\sqrt{s}}\parallel f_{t}^{+}\parallel^{2}\right)  \tag{22}%
\end{equation}
and%
\begin{equation}
\left\Vert \sum_{\left\vert k\right\vert \geq s}^{{}}\sum_{j=1}^{m}(f_{t}%
^{+},X_{k,j,t})\Psi_{k,j,t}(x)\right\Vert ^{2}\leq c_{3}\sum_{\left\vert
k\right\vert \geq s}^{{}}\sum_{j=1}^{m}\mid(f_{t}^{+},X_{k,t})\mid^{2}
\tag{23}%
\end{equation}
hold. First let us prove (22). Using (7) we obtain
\begin{equation}
\mid(f_{t}^{+},X_{k,j,t})\mid^{2}\leq c_{4}\left(  \mid(e^{-itx}f_{t}%
^{+},u_{j}e^{i2\pi kx})\mid^{2}+\parallel f_{t}^{+}\parallel^{2}\left\vert
k^{-1}\ln|k|\right\vert ^{2}\right)  \tag{24}%
\end{equation}
for $\left\vert k\right\vert \geq N(h)$. Since $u_{j}\in\mathbb{C}^{m}$ and
$e_{1},e_{1},...,e_{m}$\ is a standard basis in $\mathbb{C}^{m}$ we have
\begin{equation}
\sum_{\left\vert k\right\vert \geq s}^{{}}\sum_{j=1}^{m}\mid(e^{-itx}f_{t}%
^{+},u_{j}e^{i2\pi kx})\mid^{2}\leq c_{5}\sum_{\left\vert k\right\vert \geq
s}^{{}}\sum_{j=1}^{m}\mid(e^{-itx}f_{t}^{+},e_{j}e^{i2\pi kx})\mid^{2}.
\tag{25}%
\end{equation}
Thus (22) follows from (24) and \ (25).

Now we prove (23). For this we use the relations%
\begin{equation}
\Psi_{k,j,t}(x)=e(t)v_{j}e^{i(2\pi k+t)x}\text{ }+h_{k,j,t}(x),\text{
}\left\Vert h_{k,j,t}\right\Vert =O(k^{-1}\ln|k|) \tag{26}%
\end{equation}
(see (6)) and%
\begin{equation}
\left\Vert (f_{t}^{+},X_{k,j,t})h_{k,j,t}(x)\right\Vert \leq c_{6}\left(
\mid(f_{t}^{+},X_{k,j,t})\mid^{2}+\left\vert k^{-1}\ln|k|\right\vert
^{2}\right)  \tag{27}%
\end{equation}
for $\left\vert k\right\vert \geq N(h)$. Using (24), (25) and the Bessel
inequality for the orthonormal basis
\begin{equation}
\left\{  e_{i}e^{i2\pi kx}:k\in\mathbb{Z};\text{ }i=1,2,...,m\right\}
\tag{28}%
\end{equation}
of $L_{2}^{m}(0,1)$ we obtain
\begin{equation}
\sum_{\left\vert k\right\vert \geq s}^{{}}\sum_{j=1}^{m}\mid(f_{t}%
^{+},X_{k,j,t})\mid^{2}\leq c_{7}\left(  \parallel e^{-itx}f_{t}^{+}%
\parallel^{2}+\parallel f_{t}^{+}\parallel^{2}\right)  . \tag{29}%
\end{equation}
This and (27) imply that the series
\[
\sum_{\left\vert k\right\vert \geq s}^{{}}\sum_{j=1}^{m}(f_{t}^{+}%
,X_{k,j,t})v_{j}e^{i(2\pi k+t)x}\text{ \ }%
\]
and
\[
\sum_{\left\vert k\right\vert \geq s}^{{}}\sum_{j=1}^{m}(f_{t}^{+}%
,X_{k,j,t})h_{k,j,t}(x)
\]
converge in the norm of $L_{2}^{m}(0,1).$ Therefore by (26) we have
\begin{equation}
\left\Vert \sum_{\left\vert k\right\vert \geq s}^{{}}\sum_{j=1}^{m}(f_{t}%
^{+},X_{k,j,t})\Psi_{k,j,t}(x)\right\Vert ^{2}\leq2S_{1}+2S_{2}^{2}, \tag{30}%
\end{equation}
where
\begin{equation}
S_{2}=\left\Vert \sum_{\left\vert k\right\vert \geq s}^{{}}\sum_{j=1}%
^{m}(f_{t}^{+},X_{k,j,t})h_{k,j,t}\right\Vert \tag{31}%
\end{equation}
and%
\begin{equation}
S_{1}=\left\Vert \sum_{\left\vert k\right\vert \geq s}^{{}}\sum_{j=1}%
^{m}(f_{t}^{+},X_{k,j,t})v_{j}e(t)e^{i(2\pi k+t)x}\text{ }\right\Vert ^{2}\leq
c_{8}\sum_{\left\vert k\right\vert \geq s}^{{}}\sum_{j=1}^{m}\mid(f_{t}%
^{+},X_{k,j,t})\mid^{2}. \tag{32}%
\end{equation}
\ Now let us estimate $S_{2}.$ It follows from the second equality of (26)
that
\[
S_{2}\leq c_{9}\sum_{\left\vert k\right\vert \geq s}^{{}}\sum_{j=1}^{m}%
\mid(f_{t}^{+},X_{k,j,t})\mid k^{-1}\ln|k|.
\]
Now using the Schwarz inequality for $l_{2}$ we obtain
\begin{equation}
S_{2}^{2}=\left(  \sum_{\left\vert k\right\vert \geq s}^{{}}\sum_{j=1}^{m}%
\mid(f_{t}^{+},X_{k,j,t})\mid^{2}\right)  O(s^{-\frac{1}{2}}). \tag{33}%
\end{equation}
Therefore (23) follows from (30)-(33). Thus (22) and (23) and hence (21) is proved.
\end{proof}

\begin{theorem}
If $f$ $\in W,$ then (17) and (18) are satisfied. The series in (17) and (18)
converge in the $L_{2}^{m}(a,b)$ norm for any $a,b\in\mathbb{R}.$
\end{theorem}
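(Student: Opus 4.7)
The plan is to derive (17) from the already-established equality (15) by justifying term-by-term integration; the proof of (18) is identical with $l_{-}$ replacing $l_{+}$ and $f^{-}$ replacing $f^{+}$. The sum over $k=1,\ldots,K(h)$ in (15) is finite, so trivially commutes with integration. Thus the entire task reduces to showing that the remainder
\[
R_{s}(x):=\int_{l_{+}(h,\varepsilon)}\sum_{|k|>s}\sum_{j=1}^{m}(f_{t}^{+},X_{k,j,t})\Psi_{k,j,t}(x)\,dt
\]
satisfies $\Vert R_{s}\Vert_{(a,b)}\to 0$ as $s\to\infty$ for every $a,b\in\mathbb{R}$, since this forces the partial sums of the interchanged series to converge to the integrated tail in the $L_{2}^{m}(a,b)$ norm.

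\textbf{Bounding the remainder.} First I would apply Minkowski's integral inequality to move the $L_{2}^{m}(a,b)$ norm inside the integral over $t$. Each $\Psi_{k,j,t}$ satisfies the quasi-periodicity $\Psi(x+1)=e^{it}\Psi(x)$, and this property is preserved under finite linear combinations; since $|\operatorname{Im}t|\leq h$ throughout $l_{+}(h,\varepsilon)$, a direct computation $\vert\Psi(x+n)\vert=e^{-n\operatorname{Im}t}\vert\Psi(x)\vert$ and summation over the finitely many integer shifts meeting $[a,b]$ yields
\[
\Vert g_{s}(t,\cdot)\Vert_{(a,b)}\leq C(a,b,h)\Vert g_{s}(t,\cdot)\Vert,\qquad g_{s}(t,\cdot):=\sum_{|k|>s}\sum_{j=1}^{m}(f_{t}^{+},X_{k,j,t})\Psi_{k,j,t},
\]
with $C(a,b,h)$ independent of both $t\in l_{+}(h,\varepsilon)$ and $s$. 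Applying Lemma 1 then gives
\[
\Vert g_{s}(t,\cdot)\Vert^{2}\leq c_{1}\left(\sum_{|k|>s}\sum_{j=1}^{m}\vert(e^{-itx}f_{t}^{+},e_{j}e^{i2\pi kx})\vert^{2}+\frac{\Vert f_{t}^{+}\Vert^{2}}{\sqrt{s}}\right),
\]
so it suffices to show that the square root of this right-hand side, integrated over the compact curve $l_{+}(h,\varepsilon)$, tends to $0$.

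\textbf{The main obstacle: uniform domination.} The Bessel sum tends to $0$ pointwise in $t$ because $\{e_{j}e^{i2\pi kx}:k\in\mathbb{Z},\;j=1,\ldots,m\}$ is an orthonormal basis of $L_{2}^{m}(0,1)$ (see (28)), and $\Vert f_{t}^{+}\Vert^{2}/\sqrt{s}\to 0$ pointwise as well. The real work is producing an $s$-independent integrable majorant so that Lebesgue's dominated convergence theorem applies. This is where the hypothesis $f\in W$ becomes indispensable: by Definition 1, $f_{t}^{+}(x)$ is continuous on $\overline{D_{+}(h,\varepsilon)}\times[0,1]$, which makes $t\mapsto\Vert f_{t}^{+}\Vert$ continuous and hence bounded on the compact curve $l_{+}(h,\varepsilon)$. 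Combining this with the trivial estimate $\Vert e^{-itx}f_{t}^{+}\Vert\leq e^{h}\Vert f_{t}^{+}\Vert$ (coming from $|\operatorname{Im}t|\leq h$) and Bessel's inequality bounds the full Bessel sum by $e^{2h}\sup_{t\in l_{+}}\Vert f_{t}^{+}\Vert^{2}$, a finite constant. Dominated convergence then forces $\Vert R_{s}\Vert_{(a,b)}\to 0$, which proves (17) and, by the symmetric argument, (18). The series convergence claim is a byproduct: the partial sums form a Cauchy sequence in $L_{2}^{m}(a,b)$ by exactly the same tail estimate.
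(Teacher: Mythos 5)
Your proposal is correct and follows essentially the same route as the paper: reduce (17) to showing the tail $R_s^+\to 0$ in $L_2^m(a,b)$, invoke Lemma 1 (inequality (21)), use the quasi-periodicity $\Psi_{k,j,t}(x+1)=e^{it}\Psi_{k,j,t}(x)$ to pass from the $[0,1]$ norm to the $(a,b)$ norm, and exploit the continuity of $f_t^+$ on the compact curve $l_+(h,\varepsilon)$ guaranteed by Definition 1. The only (cosmetic) differences are that you kill the tail via dominated convergence with the uniform majorant $e^{2h}\sup_t\Vert f_t^+\Vert^2$ and move the norm inside the integral by Minkowski's integral inequality, whereas the paper obtains uniform convergence of the Bessel series on the compact curve (Dini-type argument) and uses the Schwarz inequality together with Fubini's theorem, treating the real intervals and the semicircles separately.
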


\begin{proof}
Using the definition of $l_{+}(\varepsilon,h),$ Definition 1 and Summary 1 one
can easily verify that the functions
\[
f_{t}^{+}(x),\text{ }(f_{t}^{+},X_{k,t})\Psi_{k,t}(x),\text{ }(f_{t}%
^{+},X_{k,j,t})\Psi_{k,j,t}(x)
\]
are integrable with respect to $t$ on the curve $l_{+}(\varepsilon,h).$
Therefore from (15) we obtain%
\[
f^{+}(x)=\frac{1}{2\pi}%
{\displaystyle\sum\limits_{k=1}^{K(h)}}
\int\limits_{l_{+}(\varepsilon,h)}(f_{t}^{+},X_{k,t})\Psi_{k,t}(x)dt+\sum
_{\left\vert k\right\vert <s}^{{}}\sum_{j=1}^{m}\int\limits_{l_{+}%
(\varepsilon,h)}(f_{t}^{+},X_{k,t})\Psi_{k,t}(x)dt+R_{s}^{+}(x),
\]
where
\[
R_{s}^{+}(x)=\int\limits_{l_{+}(\varepsilon,h)}\sum_{\left\vert k\right\vert
\geq s}^{{}}\sum_{j=1}^{m}(f_{t}^{+},X_{k,j,t})\Psi_{k,j,t}(x)dt
\]
and $s>N(h).$ Thus to prove (17) it is enough to show that%
\begin{equation}
\left\Vert R_{s}^{+}\right\Vert _{(-p,p)}\rightarrow0 \tag{34}%
\end{equation}
as $s\rightarrow\infty$ for each $p>0.$

Now we prove (34). Since (28) is an orthonormal basis in $L_{2}^{m}(0,1),$ we
have
\begin{equation}
\sum_{k\in\mathbb{Z}}^{{}}\sum_{j=1}^{m}\mid(e^{-itx}f_{t}^{+},e_{j}e^{i2\pi
kx})\mid^{2}=\parallel e^{-itx}f_{t}^{+}\parallel^{2}. \tag{35}%
\end{equation}
Moreover, using Definition 1 we obtain that $\mid(e^{-itx}f_{t}^{+}%
,e_{j}e^{i2\pi kx})\mid^{2}$ for $k\in\mathbb{Z}$ , $j=1,2,...,m$ and
$\parallel e^{-itx}f_{t}^{+}\parallel^{2}$ are the continuous function on the
compact $l_{+}(\varepsilon,h).$ Therefore the series in (35) converges
uniformly on $l_{+}(\varepsilon,h).$ This statement with (21) implies that
\begin{equation}
\left\Vert G_{s}^{+}(t,\cdot)\right\Vert ^{2}\rightarrow0 \tag{36}%
\end{equation}
uniformly on $l_{+}(\varepsilon,h)$ as $s\rightarrow\infty,$ where
\[
G_{s}^{+}(t,x)=\sum_{\left\vert k\right\vert \geq s}^{{}}\sum_{j=1}^{m}%
(f_{t}^{+},X_{k,j,t})\Psi_{k,j,t}(x).
\]
Thus using the obvious equality
\[
\Psi_{k,j,t}(x+1)=e^{it}\Psi_{k,j,t}(x)
\]
we obtain
\begin{equation}
\left\Vert G_{s}^{+}(t,\cdot)\right\Vert _{(-p,p)}^{2}\leq b_{s}(p) \tag{37}%
\end{equation}
for all $t\in l_{+}(\varepsilon,h),$ where $b_{s}(p)\rightarrow0$ as
$s\rightarrow\infty$ for all $p>0.$ It implies that
\[
\int\limits_{l_{+}(\varepsilon,h)}\int\limits_{(-p,p)}\left\vert G_{s}%
^{+}(t,x)\right\vert ^{2}dxdt
\]
exists.

Now we consider the integral over $l_{+}(h,\varepsilon)$ as sum of the
integrals over $r(h,\varepsilon)=l_{+}(h,\varepsilon)\cap\mathbb{R}$ and the
semicircles (13). Thus $R_{s}^{+}(x)$ is the sum of the integrals
\begin{equation}
\int\limits_{r(h,\varepsilon)}G_{s}^{+}(t,x)dt,\int\limits_{\gamma_{+}%
(0,h)}G_{s}^{+}(t,x)dt,\int\limits_{\gamma_{+}(\pi,h)}G_{s}^{+}(t,x)dt,\int
\limits_{\gamma_{+}(t_{j},h)}G_{s}^{+}(t,x)dt \tag{38}%
\end{equation}
for $j=1,2,...,v(h).$ First let us consider the integral over $r(h,\varepsilon
).$ Using the obvious inequality%
\[
\left\vert \int\limits_{r(h,\varepsilon)}f(t)dt\right\vert ^{2}\leq c_{10}%
\int\limits_{r(h,\varepsilon)}\left\vert f(t)\right\vert ^{2}dt,
\]
Fubuni theorem and (37) we obtain%
\[
\int\limits_{(-p,p)}\left\vert \int\limits_{r(h,\varepsilon)}G_{s}%
^{+}(t,x)dt\right\vert ^{2}dx\leq c_{10}\int\limits_{(-p,p)}\int
\limits_{r(h,\varepsilon)}\left\vert G_{s}^{+}(t,x)\right\vert ^{2}dtdx=
\]%
\begin{equation}
c_{10}\int\limits_{r(h,\varepsilon)}\int\limits_{(-p,p)}\left\vert G_{s}%
^{+}(t,x)\right\vert ^{2}dxdt=c_{10}\int\limits_{r(h,\varepsilon)}\left\Vert
G_{s}^{+}(t,\cdot)\right\Vert _{(-p,p)}^{2}dt\leq c_{11}b_{s}(p)\rightarrow0
\tag{39}%
\end{equation}
as $s\rightarrow\infty$.

Now we estimate the $L_{2}^{m}(-p,p)$ norm of the second integral in (38). The
estimates of the third and fourth integrals in (38) are the same. Using the
substitution $t=he^{i\varphi}$ and repeating the proof of (39) we obtain
\[
\int\limits_{(-p,p)}\left\vert \int\limits_{\gamma_{+}(0,h)}G_{s}%
^{+}(t,x)dt\right\vert ^{2}dx=\int\limits_{(-p,p)}\left\vert \int
\limits_{(0,\pi)}G_{s}^{+}(he^{i\varphi},x)ihe^{i\varphi}d\varphi\right\vert
^{2}dx\rightarrow0
\]
as $s\rightarrow\infty.$ Since $R_{s}^{+}(x)$ is the sum of the integrals in
(38), relation (34) and hence equality (17) is proved. In the same way we
prove (18). The theorem is proved.\bigskip
\end{proof}

\begin{remark}
Note that the relation (39) and the last arguments imply the term by term
integration of the series (10a) over $r(h,\varepsilon(h))$, $\gamma
_{+}(0,h),\gamma_{+}(\pi,h)$ and $\gamma_{+}(t_{j},\varepsilon(h))$ for
$j=1,2,...,v(h).$ Similarly, the series
\begin{equation}
f_{t}^{-}(x)=\sum_{k=1}^{K(h)}(f_{t}^{-},X_{k,t})\Psi_{k,t}(x)+\sum
_{\left\vert k\right\vert \geq N(h)}^{{}}\sum_{j=1}^{m}(f_{t}^{-}%
,X_{k,j,t})\Psi_{k,j,t}(x) \tag{10b}%
\end{equation}
can be integrated term by term over $r(h,\varepsilon(h))$, $\gamma
_{-}(0,h),\gamma_{-}(\pi,h)$ and $\gamma_{-}(t_{j},\varepsilon(h))$ for
$j=1,2,...,v(h),$ where $\gamma_{-}=\{\lambda:\overline{\lambda}\in\gamma
_{+}\}.$
\end{remark}

Now to prove Theorem 1 we use the following consequence of Theorem 2.

\begin{corollary}
\bigskip If $f\in W,$ then
\begin{equation}
\int\limits_{r(h,\varepsilon(h))}f_{t}dt=\sum_{k=1}^{K(h)}\int
\limits_{r(h,\varepsilon(h))}(f_{t},X_{k,t})\Psi_{k,t}dt+\sum_{\left\vert
k\right\vert \geq N(h)}^{{}}\sum_{j=1}^{m}\int\limits_{r(h,\varepsilon
(h))}(f_{t},X_{k,j,t})\Psi_{k,j,t}dt, \tag{40}%
\end{equation}
where the series in (40) converges in the $L_{2}^{m}(a,b)$ norm for any
$a,b\in\mathbb{R}.$
\end{corollary}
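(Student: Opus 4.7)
The plan is to reduce the corollary to the splitting $f = f^+ + f^-$ furnished by Definition 1 and then apply Remark 1 separately to $f^+$ and $f^-$. Since the Gelfand transform defined by the series in (8) is linear in $f$, we have $f_t(x) = f_t^+(x) + f_t^-(x)$, and by linearity of the inner product in its first argument,
\[
(f_t, X_{k,t}) = (f_t^+, X_{k,t}) + (f_t^-, X_{k,t}),\qquad (f_t, X_{k,j,t}) = (f_t^+, X_{k,j,t}) + (f_t^-, X_{k,j,t}).
\]
So once (40) is established with $f$ replaced by $f^+$ and by $f^-$, summing the two identities yields (40) for $f$.

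The key tool is Remark 1, which is extracted from the proof of Theorem 2: the estimate (39) shows that $\int_{r(h,\varepsilon)} G_s^+(t,\cdot)\,dt \to 0$ in $L_2^m(-p,p)$ as $s\to\infty$ for every $p>0$, and the analogous estimate (obtained by passing from $l_+$ to $l_-$, as noted in the proof of Theorem 2) gives the same conclusion for $G_s^-$. Therefore the series (10a) and (10b), integrated term by term over $r(h,\varepsilon(h))$, converge in $L_2^m(a,b)$ for every bounded interval $(a,b)$, and
\[
\int_{r(h,\varepsilon(h))}\! f_t^{\pm}\,dt = \sum_{k=1}^{K(h)}\int_{r(h,\varepsilon(h))}\!(f_t^{\pm},X_{k,t})\Psi_{k,t}\,dt + \sum_{|k|\ge N(h)}\sum_{j=1}^{m}\int_{r(h,\varepsilon(h))}\!(f_t^{\pm},X_{k,j,t})\Psi_{k,j,t}\,dt.
\]

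Adding these two identities and using the linearity observation above gives exactly (40). Convergence in $L_2^m(a,b)$ for arbitrary $a,b$ follows since $(a,b) \subset (-p,p)$ for sufficiently large $p$, reducing the question to the already-proved convergence in $L_2^m(-p,p)$.

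The only real content here is the appeal to Remark 1; the rest is an algebraic consequence of the $f = f^+ + f^-$ decomposition. The main potential obstacle is to be sure that Remark 1 genuinely covers term-by-term integration over the real piece $r(h,\varepsilon(h))$ (not only over the full curve $l_\pm(h,\varepsilon)$). This is indeed the case because (39) is proved directly for the integral over $r(h,\varepsilon)$, with the semicircular pieces handled separately. No further analytic work is needed beyond what already appears in the proof of Theorem 2.
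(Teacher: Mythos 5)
Your proposal is correct and matches the paper's own proof: both invoke the term-by-term integration of (10a) and (10b) over $r(h,\varepsilon(h))$ established by (39) (i.e., Remark 1) and then add the resulting identities for $f^{+}$ and $f^{-}$ using $f_{t}^{+}+f_{t}^{-}=f_{t}$. No further comment is needed.
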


\begin{proof}
\bigskip Since (39) implies the term by term integration of the series (10a)
over $r(h,\varepsilon(h))$, it follows from (10a) that
\[
\int\limits_{r(h,\varepsilon)}f_{t}^{+}dt=\sum_{k=1}^{K(h)}\int
\limits_{r(h,\varepsilon)}(f_{t}^{+},X_{k,t})\Psi_{k,t}dt+\sum_{\left\vert
k\right\vert \geq N(h)}^{{}}\sum_{j=1}^{m}\int\limits_{r(h,\varepsilon)}%
(f_{t}^{+},X_{k,j,t})\Psi_{k,j,t}dt.
\]
In the same way from (10b) we obtain
\[
\int\limits_{r(h,\varepsilon)}f_{t}^{-}dt=\sum_{k=1}^{K(h)}\int
\limits_{r(h,\varepsilon)}(f_{t}^{-},X_{k,t})\Psi_{k,t}dt+\sum_{\left\vert
k\right\vert \geq N(h)}^{{}}\sum_{j=1}^{m}\int\limits_{r(h,\varepsilon)}%
(f_{t}^{-},X_{k,j,t})\Psi_{k,j,t}dt.
\]
The last two relations with the equality $f_{t}^{+}+f_{t}^{-}=f_{t}^{{}}$ (see
Definition 1) imply the proof of the Corollary.
\end{proof}

Now we are ready to prove Theorem 1.

\textbf{The proof of Theorem 1. }By Definition 1 if $f\in W,$ then there exist
$M$ such that
\[
\left\vert f_{t}(x)\right\vert <M
\]
for all $x\in(-p,p)$ and $t\in\lbrack0,2\pi].$ On the other hand it follows
from the definition of $r(h,\varepsilon(h))$ that the measure of the set
$[0,2\pi]\backslash r(h,\varepsilon(h))$ is less than $6h.$ Therefore, we
have
\[
\left\Vert
{\displaystyle\int\limits_{\lbrack0,2\pi]\backslash r(h,\varepsilon(h))}}
f_{t}dt\right\Vert _{(-p,p)}\leq6Mh.
\]
This with (8) and (40) gives \
\[
\left\Vert f-\frac{1}{2\pi}%
{\displaystyle\int\limits_{r(h,\varepsilon(h))}}
f_{t}dt\right\Vert _{(-p,p)}\leq6Mh
\]
and \
\[
\left\Vert f-\tfrac{1}{2\pi}\left(  \sum_{k=1}^{K(h)}\int
\limits_{r(h,\varepsilon)}(f_{t},X_{k,t})\Psi_{k,t}+\sum_{\left\vert
k\right\vert \geq N(h)}^{{}}\sum_{j=1}^{m}\int\limits_{r(h,\varepsilon)}%
(f_{t},X_{k,j,t})\Psi_{k,j,t}\right)  \right\Vert _{(-p,p)}\leq6Mh.
\]
Now taking $h<\frac{\delta}{6M}$ we obtain the proof of Theorem 1.

\section{Spectral Expansion with Brackets}

Now to obtain a spectral expansion from (17) and (18) we need to change the
curves $l_{+}(\varepsilon,h)$ and $l_{-}(\varepsilon,h)$ to $(-h,2\pi-h).$ In
the other words, we need to change the semicircles $\ $ $\gamma_{\pm}(0,h)$,
$\gamma_{\pm}(\pi,h)$ and $\gamma_{\pm}(t_{j},\varepsilon(h))$ to $(-h,h),$
$(\pi-h,\pi+h)$ and $(t_{j}-\varepsilon(h),t_{j}-\varepsilon(h))$ for
$j=1,2,...,v(h)$ respectively. This is done as follows. The equality (8) can
be written in the form%
\begin{equation}
f(x)=\frac{1}{2\pi}\left(  \int\limits_{r(h,\varepsilon(h))}f_{t}dt+\left(
\sum\limits_{j=1}^{v(h)}\int\limits_{(t_{j}-\varepsilon,t_{j}+\varepsilon
)}f_{t}(x)dt\right)  +\int\limits_{(-h,h)}f_{t}dt+\int\limits_{(\pi-h,\pi
+h)}f_{t}dt\right)  , \tag{41}%
\end{equation}
since
\[
\left(  \cup_{j=1}^{v(h)}(t_{j}-\varepsilon,t_{j}+\varepsilon)\right)  \cup
r(h,\varepsilon(h))\cup(-h,h)\cup(\pi-h,\pi+h)=(-h,2\pi-h].
\]
The first summand of the right hand side of (41) is decomposed by the Bloch
functions due to (40). We say that (40) is the spectral expansion for the
first summand of the right hand side of (41). In this section we obtain the
spectral expansion for the other summand of (41). As is noted in introduction
the intervals $(t_{j}-\varepsilon,t_{j}+\varepsilon)$ and the disks enclosed
by $\gamma_{+}(t_{j},\varepsilon)$ and $\gamma_{-}(t_{j},\varepsilon)$ for
$j=1,2,...,v(h)$ contain only one point $t_{j}$ from $A$ and hence they may
contain at most$\ $one SQ, while the intervals $(-h,h)$ and $(\pi-h,\pi+h)$
and the disks enclosed by $\gamma_{+}(0,h)\cup\gamma_{-}(0,h)$ and $\gamma
_{+}(\pi,h)\cup\gamma_{-}(\pi,h)$ contain, in general, infinitely many SQ.
Therefore replacing $\gamma_{\pm}(t_{j},\varepsilon)$ by $(t_{j}%
-\varepsilon,t_{j}-\varepsilon)$ for $j=1,2,...,v$ is easier than replacing
$\gamma_{\pm}(0,h)$ and $\gamma_{\pm}(\pi,h)$ by $(-h,h)$ and $(\pi-h,\pi+h)$
respectively. In other words, constructing a spectral expansion for the second
term is easier than doing the same for the third and fourth terms in (41).

First we consider the simpler case, that is, investigate the integrals over
$\gamma_{\pm}(t_{j},\varepsilon)$ for $j=1,2,...,v.$ Recall that (see
introduction) $\left\{  t_{1},t_{2},...,t_{v(h)}\right\}  $ is the set of the
quasimomenta $t\in B(h)$, where $B(h)=[h,\pi-h]\cup\lbrack\pi+h,2\pi-h],$ for
which the operator $L_{t}$ has a multiple eigenvalue. Moreover, it follows
from Summary 1$(b)$ that the operator $L_{t_{j}}$ has at most finite number
multiple eigenvalues. Let $\Lambda_{1}(t_{j}),$ $\Lambda_{2}(t_{j}%
),...,\Lambda_{s_{j}}(t_{j})$ be the different multiple eigenvalues of the
operator $L_{t_{j}}.$ Introduce the set
\begin{equation}
\mathbb{T}(s,j):=\left\{  \mid k\mid<N(h):\lambda_{k}(t_{j})=\Lambda_{s}%
(t_{j})\right\}  . \tag{42}%
\end{equation}
Note that by Summary 1$(b)$ if $\mid k\mid\geq N(h),$ then $\lambda
_{k,j}(t_{j})$ is a simple eigenvalue of $L_{t_{j}}.$Therefore the set of all
eigenvalues of $L_{t_{j}}$ coinciding with $\Lambda_{v}(t_{j})$ is the the set
$\lambda_{k}(t_{j})$ for $k\in\mathbb{T}(v,j).$

\begin{theorem}
If $f\in W,$ then for each $x\in(-\infty,\infty)$ the expression%
\begin{equation}
\sum_{k\in\mathbb{T}(s,j)}(f_{t}^{+},X_{k,t})\Psi_{k,t}(x) \tag{43}%
\end{equation}
is integrable over $(t_{j}-\varepsilon,t_{j}+\varepsilon)$ and
\begin{equation}
\sum_{k\in\mathbb{T}(s,j)}\int\limits_{\gamma_{+}(t_{j},\varepsilon)}%
(f_{t}^{+},X_{k,t})\Psi_{k,t}(x)dt=\int\limits_{(t_{j}-\varepsilon
,t_{j}+\varepsilon)}\sum_{k\in\mathbb{T}(s,j)}(f_{t}^{+},X_{k,t})\Psi
_{k,t}(x)dt \tag{44}%
\end{equation}
for all $j=1,2,...,v.$ Theorem remains valid if $f_{t}^{+}$ and $\gamma
_{+}(t_{j},\varepsilon)$ are replaced by $f_{t}^{-}$ and $\gamma_{-}%
(t_{j},\varepsilon)$ respectively.
\end{theorem}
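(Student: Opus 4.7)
My plan is to recognise the sum in (43) as a Riesz spectral projection of $L_t$ associated with a contour around $\Lambda_s(t_j)$, and to close the argument via Cauchy's theorem on the upper half-disk at $t_j$. By Summary~1$(a)$ the set $A\cap Q_h$ is finite and $\Lambda_s(t_j)$ is an isolated multiple eigenvalue of $L_{t_j}$; by Summary~1$(d)$ the eigenvalues depend continuously on $t$. Hence one can fix a small simple closed contour $C_s$ in the complex $\lambda$-plane encircling $\Lambda_s(t_j)$ and, shrinking $\varepsilon$ if necessary, arrange that for every $t$ with $|t-t_j|\le\varepsilon$ the contour $C_s$ separates the cluster $\{\lambda_k(t):k\in\mathbb{T}(s,j)\}$ from the rest of $\sigma(L_t)$. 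Set
\[
P_s(t)=-\frac{1}{2\pi i}\oint_{C_s}(L_t-\lambda)^{-1}\,d\lambda.
\]

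Standard analytic perturbation theory then shows that $P_s(t)$ is an analytic finite-rank operator-valued function of $t$ on $\{|t-t_j|<\varepsilon\}$, continuous up to the boundary. For $t\ne t_j$ in this disk the enclosed eigenvalues $\lambda_k(t)$, $k\in\mathbb{T}(s,j)$, are simple, so $P_s(t)$ decomposes as a sum of the one-dimensional eigenprojectors and
\[
P_s(t)f=\sum_{k\in\mathbb{T}(s,j)}(f,X_{k,t})\Psi_{k,t}\qquad(f\in L_2^m(0,1),\ t\ne t_j).
\]
Setting $f=f_t^+$ and combining with the continuity and analyticity in $t$ of $f_t^+$ supplied by Definition~1, the function $S_s(t,x):=(P_s(t)f_t^+)(x)$ is continuous on the closed upper half-disk $\overline{D}=\{|t-t_j|\le\varepsilon,\ \operatorname{Im}t\ge0\}$ and analytic in its interior; pointwise evaluation at $x$ preserves analyticity because the resolvent $(L_t-\lambda)^{-1}$ admits an analytic Green-function kernel. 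Continuity of $S_s(\cdot,x)$ on the diameter yields the integrability of the sum (43) over $(t_j-\varepsilon,t_j+\varepsilon)$.

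To establish (44) I apply Cauchy's theorem to $S_s(\cdot,x)$ on $\overline{D}$, obtaining $\oint_{\partial D}S_s(t,x)\,dt=0$. Traversing $\partial D$ counterclockwise one passes along the diameter from left to right and then along $\gamma_+(t_j,\varepsilon)$ from $t_j+\varepsilon$ back to $t_j-\varepsilon$. Reversing the orientation of the semicircle to match the one inherited from $l_+(h,\varepsilon)$ gives
\[
\int_{\gamma_+(t_j,\varepsilon)}S_s(t,x)\,dt=\int_{(t_j-\varepsilon,t_j+\varepsilon)}S_s(t,x)\,dt,
\]
which, after replacing $S_s(t,x)$ by the eigenfunction expansion of $P_s(t)f_t^+(x)$ valid off the null set $\{t_j\}$, is exactly (44). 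The statement for $f_t^-$ and $\gamma_-(t_j,\varepsilon)$ follows by applying the same argument on the lower half-disk.

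The principal obstacle is the first step: producing a single contour $C_s$ and radius $\varepsilon$ that uniformly isolate the cluster $\{\lambda_k(t):k\in\mathbb{T}(s,j)\}$ for all $t$ in a complex neighbourhood of $t_j$ and yield a uniformly bounded resolvent on $C_s$. This requires the analytic dependence of the family $L_t$ on $t$, the finiteness of the multiple eigenvalues of $L_{t_j}$ guaranteed by Summary~1$(a)$, and a standard resolvent-perturbation estimate adapted to this non-self-adjoint setting; once in place, the Cauchy-theorem argument proceeds mechanically.
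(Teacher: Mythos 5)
Your proposal is correct and follows essentially the same route as the paper: the paper also identifies the sum (43) with the total Riesz projection $\int_{C(t_j)}A(x,\lambda,t)\,d\lambda$ built from the Green function of $L_t$ over a circle isolating the cluster $\{\lambda_k(t):k\in\mathbb{T}(s,j)\}$ (obtained via the characteristic determinant, the resultant, and the implicit function theorem), and then invokes the Cauchy-theorem argument of Theorem 2.1 of [12] to pass from $\gamma_{+}(t_j,\varepsilon)$ to the interval. You have merely written out explicitly the analyticity and contour-deformation steps that the paper delegates to the cited reference.
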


\begin{proof}
The eigenvalues of the operator $L_{t}$ are the roots of the characteristic
determinant
\[
\Delta(\lambda,t)=\det(Y_{j}^{(\nu-1)}(1,\lambda)-e^{it}Y_{j}^{(\nu
-1)}(0,\lambda))_{j,\nu=1}^{n}=
\]%
\[
e^{inmt}+f_{1}(\lambda)e^{i(nm-1)t}+f_{2}(\lambda)e^{i(nm-2)t}+...+f_{nm-1}%
(\lambda)e^{it}+1,
\]
where $Y_{1}(x,\lambda),Y_{2}(x,\lambda),\ldots,Y_{n}(x,\lambda)$ are the
solutions of the matrix equation
\[
Y^{(n)}(x)+P_{2}\left(  x\right)  Y^{(n-2)}(x)+P_{3}\left(  x\right)
Y^{(n-3)}(x)+...+P_{n}(x)Y=\lambda Y(x)
\]
satisfying $Y_{k}^{(j)}(0,\lambda)=0_{m}$ for $j\neq k-1$ and $Y_{k}%
^{(k-1)}(0,\lambda)=I_{m},$ $0_{m}$ and $I_{m}$ are $m\times m$ zero and
identity matrices respectively (see [7, Chap. 3]). It is clear that
$\Delta(\lambda,t)$ is a polynomial of $e^{it}$\ with entire coefficients
$f_{1}(\lambda),f_{2}(\lambda),...$. Therefore the multiple eigenvalues of the
operators $L_{t}$ are the zeros of the resultant $R(\lambda)\equiv
R(\Delta,\Delta^{^{\prime}})$ of the polynomials $\Delta(\lambda,t)$ and
$\Delta^{^{\prime}}(\lambda,t):=\frac{\partial}{\partial\lambda}\Delta
(\lambda,t).$ Since $R(\lambda)$ is an entire function and the large
eigenvalues of $L_{t}$ for $t\neq0,\pi$ are simple (see Summary 1 $(b)$),
either
\[
\ker R=\{\lambda:R(\lambda)=0\}=\{a_{1},a_{2},...\},\text{ }\lim
_{k\rightarrow\infty}\mid a_{k}\mid=\infty
\]
or $\ker R$ is a finite set. If $\Lambda_{v}(t_{j})$ is the $p$-multiple
eigenvalue of $L_{t_{j}},$ then it is the $p$-multiple root of the equation
$\Delta(\lambda,t_{j})=0.$ Therefore using the implicit function theorem one
can easily conclude that there exist the positive constant $\varepsilon$ and
$r$ such that the operators $L_{t}$ for $t\in\{t\in\mathbb{C}:0<\left\vert
t-t_{j}\right\vert <\varepsilon\}$ has only $p$ eigenvalues inside the circle
$C(t_{j})=\left\{  \lambda\in\mathbb{C}:\left\vert \lambda-\Lambda_{v}%
(t_{j})\right\vert =r\right\}  .$ These eigenvalues are simple and are
$\lambda_{k}(t)$ for $k\in\mathbb{T}(v,j).$ Moreover $C(t_{j})$ lies in the
resolvent set of the operators $L_{t}$ for all $t\in\{t\in\mathbb{C}%
:0<\left\vert t-t_{j}\right\vert <\varepsilon\}.$ It is well-known [7, Chap.
3] that the total projection defined by integration of the resolvent of
$L_{t}$ over $C(t_{j})$ has the form
\[
T(x,t):=\int\nolimits_{C(t_{j})}A(x,\lambda,t)d\lambda,
\]
where
\[
A(x,\lambda,t):=\int\nolimits_{0}^{1}G(x,\xi,\lambda,t)f_{t}(\xi)d\xi
\]
and $G(x,\xi,\lambda,t)$ is the Green function of the operator $L_{t}$ defined
by formula (8) of [7, p.117]). Therefore using this formula instead of formula
(34) of [7, page 37] for the Creen function in the scalar case and repeating
the proof of Theorem 2.1 of [12] we get the proof of the theorem.
\end{proof}

By Theorem 3 if $\lambda_{k}(t_{j})$ is a simple eigenvalue then $(f_{t}^{\pm
},X_{k,t})\Psi_{k,t}(x)$ is integrable over $(t_{j}-\varepsilon,t_{j}%
+\varepsilon)$ and
\begin{equation}
\int\limits_{\gamma_{\pm}(t_{j},\varepsilon)}a_{k}^{\pm}(t)\Psi_{k,t}%
(x)dt=\int\limits_{(t_{j}-\varepsilon,t_{j}+\varepsilon)}a_{k}^{\pm}%
(t)\Psi_{k,t}(x)dt, \tag{45}%
\end{equation}
where
\begin{equation}
a_{k}^{\pm}(t)\Psi_{k,t}=(f_{t}^{\pm},X_{k,t})\Psi_{k,t}=\frac{1}{\alpha
_{n}(t)}(f_{t}^{\pm},\Psi_{k,t}^{\ast})\Psi_{k,t}. \tag{46}%
\end{equation}

Now we consider the case of multiple eigenvalue $\Lambda_{s}(t_{j})$. Let
$\mathbb{B}(s,j)$ and $\mathbb{S}(s,j)$ be respectively the subset of
$\mathbb{T}(s,j)$ such that for $k\in\mathbb{B}(s,j)$ and $k\in\mathbb{S}%
(s,j)$ the function $\frac{1}{\alpha_{k}}$ is integrable and nonintegrable in
$(t_{j}-\varepsilon,t_{j}+\varepsilon)$. Then we have the following theorem.

\begin{theorem}
Let $\Lambda_{s}(t_{j})$ be a multiple eigenvalue and $f\in W.$ Then
$a_{k}^{\pm}(t)\Psi_{k,t}(x)$ for $k\in\mathbb{B}(s,j)$ and
\begin{equation}
S_{s,j}^{\pm}(x,t)=\sum_{k\in\mathbb{S}(s,j)}a_{k}^{\pm}(t)\Psi_{k,t}(x)
\tag{47}%
\end{equation}
are integrable with respect to $t$ over $(t_{j}-\varepsilon,t_{j}%
+\varepsilon)$ and
\begin{equation}
\sum_{k\in\mathbb{T}(s,j)}\int\limits_{\gamma_{\pm}(t_{j},\varepsilon)}%
a_{k}^{\pm}(t)\Psi_{k,t}(x)dt=F^{\pm}(s,j,x)+\sum_{k\in\mathbb{B}(s,j)}%
\int\limits_{(t_{j}-\varepsilon,t_{j}+\varepsilon)}a_{k}^{\pm}(t)\Psi
_{k,t}(x)dt, \tag{48}%
\end{equation}
where
\begin{equation}
F^{\pm}(s,j,x)=\int\limits_{(t_{j}-\varepsilon,t_{j}+\varepsilon)}\sum
_{k\in\mathbb{S}(s,j)}a_{k}^{\pm}(t)\Psi_{k,t}(x)dt=\lim_{\delta\rightarrow
0}\sum_{k\in\mathbb{S}(s,j)}\int\limits_{\delta<\left\vert t-t_{j}\right\vert
\leq\varepsilon}a_{k}^{\pm}(t)\Psi_{k,t}(x)dt. \tag{49}%
\end{equation}

If the multiple eigenvalue $\Lambda_{s}(t_{j})$ is not an ESS, then
\begin{equation}
\sum_{k\in\mathbb{T}(s,j)}\int\limits_{\gamma(t_{j},\varepsilon)}a_{k}%
^{+}(t)\Psi_{k,t}(x)dt=\sum_{k\in\mathbb{T}(s,j)}\int\limits_{(t_{j}%
-\varepsilon,t_{j}+\varepsilon)}a_{k}^{+}(t)\Psi_{k,t}(x)dt. \tag{50}%
\end{equation}

\end{theorem}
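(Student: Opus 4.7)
The plan is to leverage Theorem 3, which already does the bulk of the work: it asserts that the full sum $\sum_{k\in\mathbb{T}(s,j)} a_k^\pm(t)\Psi_{k,t}(x)$ is integrable on $(t_j-\varepsilon,t_j+\varepsilon)$ and that its real-line integral coincides with the contour integral over $\gamma_\pm(t_j,\varepsilon)$. All that Theorem 4 adds is a careful splitting of this sum along the partition $\mathbb{T}(s,j)=\mathbb{B}(s,j)\sqcup\mathbb{S}(s,j)$ (which is legitimate since $\mathbb{T}(s,j)\subset\{|k|<N(h)\}$ is finite), together with the interpretation of the $\mathbb{S}(s,j)$-piece through a symmetric-truncation limit.

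First I would verify that each individual summand $a_k^\pm(t)\Psi_{k,t}(x)$ with $k\in\mathbb{B}(s,j)$ is integrable in $t$ on $(t_j-\varepsilon,t_j+\varepsilon)$. Using the identity (46), $a_k^\pm(t)\Psi_{k,t}(x)=\alpha_k(t)^{-1}(f_t^\pm,\Psi_{k,t}^\ast)\Psi_{k,t}(x)$, the factor $1/\alpha_k$ is in $L^1$ of the interval by the very definition of $\mathbb{B}(s,j)$. The remaining factor $(f_t^\pm,\Psi_{k,t}^\ast)\Psi_{k,t}(x)$ is bounded: $f_t^\pm$ is continuous on the relevant closed set by Definition 1, while the normalized eigenfunctions $\Psi_{k,t}^\ast,\Psi_{k,t}$ depend continuously on $t$ on $\{0<|t-t_j|<\varepsilon\}$ (Summary 1$(d)$) and, because the perturbation-theoretic total projection around $\Lambda_s(t_j)$ is analytic in $t$ on the punctured disc (as exploited already in the proof of Theorem 3), the products of the form $(f_t^\pm,\Psi_{k,t}^\ast)\Psi_{k,t}(x)$ remain bounded up to $t=t_j$. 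Finite summation then gives integrability of $\sum_{k\in\mathbb{B}(s,j)}a_k^\pm(t)\Psi_{k,t}(x)$, and subtraction from the full integrable sum (Theorem 3) yields the integrability of $S_{s,j}^\pm(x,t)$.

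For the equality (48), I apply Theorem 3 and split the right-hand side by $\mathbb{T}(s,j)=\mathbb{B}(s,j)\sqcup\mathbb{S}(s,j)$; linearity of the Lebesgue integral moves the finite sum over $\mathbb{B}(s,j)$ outside the integral, leaving $\int_{(t_j-\varepsilon,t_j+\varepsilon)} S_{s,j}^\pm(x,t)\,dt$, which is by definition $F^\pm(s,j,x)$. For the limit characterization (49), I note that on each set $\{\delta<|t-t_j|\le\varepsilon\}$ every $1/\alpha_k$ with $k\in\mathbb{S}(s,j)$ is bounded (since the only eigenvalue collision in $|t-t_j|<\varepsilon$ is at $t=t_j$ itself, so $\alpha_k$ is nonzero and continuous off $t_j$), hence each individual $a_k^\pm(t)\Psi_{k,t}(x)$ is integrable on this truncated set and the finite sum can be interchanged with the integral; sending $\delta\downarrow 0$ is justified by the dominated convergence theorem with the already-established $L^1$ majorant $|S_{s,j}^\pm(\cdot,t)|$.

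Finally, if $\Lambda_s(t_j)$ is not an ESS, then Definition 2 forces $1/\alpha_k\in L^1(t_j-\delta,t_j+\delta)$ for every $k\in\mathbb{T}(s,j)$, so after possibly shrinking $\varepsilon$ we have $\mathbb{S}(s,j)=\varnothing$, $F^\pm=0$ and $\mathbb{T}(s,j)=\mathbb{B}(s,j)$; formula (50) is then a direct specialization of (48). The main obstacle I anticipate is securing the uniform boundedness of the auxiliary factor $(f_t^\pm,\Psi_{k,t}^\ast)\Psi_{k,t}(x)$ on the full neighborhood $(t_j-\varepsilon,t_j+\varepsilon)$, including at $t=t_j$ itself, so that the $L^1$-integrability of $1/\alpha_k$ genuinely upgrades to $L^1$-integrability of the product; this step draws essentially on Summary 1$(d)$, on the analyticity of the total projection used in the proof of Theorem 3, and on the boundedness of $f_t^\pm$ granted by $f\in W$.
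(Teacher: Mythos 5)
Your overall route coincides with the paper's own proof: establish integrability of the individual terms $a_{k}^{\pm}(t)\Psi_{k,t}(x)$ for $k\in\mathbb{B}(s,j)$, subtract their (finite) sum from the total sum over $\mathbb{T}(s,j)$ supplied by Theorem 3 to get integrability of $S_{s,j}^{\pm}$ and equality (48), obtain (49) from absolute continuity of the Lebesgue integral, and note that $\mathbb{S}(s,j)=\varnothing$ when $\Lambda_{s}(t_{j})$ is not an ESS to get (50). The one step you have not actually secured --- and which you yourself flag as ``the main obstacle'' --- is the uniform pointwise bound on $\Psi_{k,t}(x)$ for $t$ in the punctured interval $(t_{j}-\varepsilon,t_{j})\cup(t_{j},t_{j}+\varepsilon)$ and $x\in[0,1]$. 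Summary 1$(d)$ gives only continuity in $t$ away from finitely many points, which does not yield a bound that is uniform up to the puncture $t=t_{j}$; and the analyticity of the total projection controls the sum over the whole cluster $\mathbb{T}(s,j)$, not the individual factors: the individual spectral projections $\frac{1}{\alpha_{k}(t)}(\cdot,\Psi_{k,t}^{\ast})\Psi_{k,t}$ are precisely the objects that may blow up as $t\rightarrow t_{j}$, so one cannot infer boundedness of the separate products from boundedness of their sum. Note also that $\left\Vert \Psi_{k,t}\right\Vert =1$ is an $L_{2}$ normalization and does not by itself bound $\left\vert \Psi_{k,t}(x)\right\vert$ pointwise.

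The paper closes exactly this gap with a separate statement, Lemma 2 (inequality (51)): using the identity (52), integration by parts $n-\nu$ times, and the hypothesis $p_{\nu,i,j}^{(n-\nu)}\in L_{2}[0,1]$, it shows that the Fourier coefficients $(\Psi_{k,t},\varphi_{p,s,t})$ decay like $p^{-2}$ uniformly in $t$ on the punctured interval, whence the Fourier expansion of $\Psi_{k,t}$ converges uniformly and $\left\vert \Psi_{k,t}(x)\right\vert <c_{11}$. Once (51) is available, the chain $\left\vert a_{k}^{\pm}(t)\Psi_{k,t}(x)\right\vert \leq\left\vert \alpha_{k}(t)\right\vert ^{-1}\left\Vert f_{t}^{\pm}\right\Vert \left\vert \Psi_{k,t}(x)\right\vert \leq c\left\vert \alpha_{k}(t)\right\vert ^{-1}$ upgrades the $L^{1}$ property of $1/\alpha_{k}$ (the definition of $\mathbb{B}(s,j)$) to that of $a_{k}^{\pm}(t)\Psi_{k,t}(x)$, exactly as you intend. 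So your argument is repairable and structurally identical to the paper's, but as written the crucial bound is asserted rather than proved, and the mechanisms you cite for it would not deliver it.
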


\begin{proof}
If $k\in\mathbb{B}(s,j),$ then by the definition of $\mathbb{B}(s,j),$ the
function $\frac{1}{\alpha_{n}}$ is integrable over $(t_{j}-\varepsilon
,t_{j}+\varepsilon).$ On the other hand, using (46) Definition 1 and taking
into account that $\Psi_{k,t}^{\ast}$ is the normalized eigenfunction we
obtain
\[
\left\vert a_{k}^{\pm}(t)\Psi_{k,t}(x)\right\vert \leq\frac{1}{\left\vert
\alpha_{n}(t)\right\vert }\left\Vert f_{t}^{\pm}\right\Vert \left\vert
\Psi_{k,t}(x)\right\vert \leq c\frac{1}{\left\vert \alpha_{n}(t)\right\vert
}\left\vert \Psi_{k,t}(x)\right\vert .
\]
Moreover, in the following lemma (Lemma 2) we prove that%
\begin{equation}
\left\vert \Psi_{k,t}(x)\right\vert <c_{11} \tag{51}%
\end{equation}
for all $t\in\left(  (t_{j}-\varepsilon,t_{j})\cup(t_{j},t_{j}+\varepsilon
)\right)  $ and $x\in\lbrack0,1].$ Therefore
\[
\left\vert a_{k}^{\pm}(t)\Psi_{k,t}(x)\right\vert \leq c_{12}\frac
{1}{\left\vert \alpha_{n}(t)\right\vert }.
\]
This inequality implies that $a_{k}^{\pm}(t)\Psi_{k,t}(x)$ is integrable on
$(t_{j}-\varepsilon,t_{j}+\varepsilon)$ for all\ $k\in\mathbb{B}(s,j).$
Therefore by Theorem 3 $S_{s,j}^{\pm}(x,t)$ is also integrable on
$(t_{j}-\varepsilon,t_{j}+\varepsilon)$ and (48) holds. Equality (49) follows
from the absolutely continuity of the Lebesque integral.

If the multiple eigenvalue$\Lambda_{s}(t_{j})$ is not ESS, then it follows
\ from Definition 2 that $\frac{1}{\alpha_{k}}$ integrable on $(t_{j}%
-\varepsilon,t_{j}+\varepsilon)$ for all\ $k\in\mathbb{T}(s,j).$ In other
words, $\mathbb{S}(s,j)$ is an empty set and $\mathbb{B}(s,j)=\mathbb{T}%
(s,j)$. Therefore equality (50) follows from (48).
\end{proof}

Now we prove (51) by using the following well-known formula
\begin{equation}
\left(  \lambda_{k}(t)-\left(  2\pi pi+ti\right)  ^{n}\right)  \left(
\Psi_{k,t},\varphi_{p,s,t}\right)  =\left(  \sum\limits_{\nu=2}^{n}P_{\nu}%
\Psi_{k,t}^{(n-\nu)},\varphi_{p,s,t}\right)  , \tag{52}%
\end{equation}
where $\varphi_{p,s,t}(x)=e_{s}e^{i\left(  2\pi p+t\right)  x}$ and $\left\{
e_{s}:s=1,2,...,m\right\}  $ is a standard basis of $\mathbb{C}^{m}.$

\begin{lemma}
If $\mid k\mid<N(h),$ then (51) holds for all $t\in\left(  (t_{j}%
-\varepsilon,t_{j})\cup(t_{j},t_{j}+\varepsilon)\right)  $ and $x\in
\lbrack0,1].$
\end{lemma}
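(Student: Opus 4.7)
The plan is to expand $\Psi_{k,t}$ in the orthonormal basis $\{\varphi_{p,s,t}:p\in\mathbb{Z},\,s=1,\dots,m\}$ of $L_2^m[0,1]$ (orthonormal because $t$ is real and hence $|\varphi_{p,s,t}(x)|=1$), and then bound $\Psi_{k,t}(x)$ pointwise by the $\ell^{1}$ norm of its Fourier coefficients $c_{p,s}(t):=(\Psi_{k,t},\varphi_{p,s,t})$, using formula (52) to control the high-frequency terms.

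Since $|k|<N(h)$ lies in a finite set and $t$ stays in the compact interval $[t_j-\varepsilon,t_j+\varepsilon]$ on which $\lambda_{k}(t)$ remains bounded, one can choose $P_{0}$ such that $|\lambda_{k}(t)-(2\pi pi+ti)^{n}|\ge c_{0}|p|^{n}$ for all $|p|>P_{0}$ and admissible $t$. The contribution to $\sum_{p,s}|c_{p,s}(t)|$ from $|p|\le P_{0}$ is trivially bounded by $(2P_{0}+1)m$ using Cauchy--Schwarz and $\|\Psi_{k,t}\|=1$. For $|p|>P_{0}$, formula (52) gives
\[
|c_{p,s}(t)|\le\frac{|(g_{t},\varphi_{p,s,t})|}{c_{0}|p|^{n}},\qquad g_{t}:=\sum_{\nu=2}^{n}P_{\nu}\Psi_{k,t}^{(n-\nu)},
\]
and Cauchy--Schwarz combined with Bessel's inequality yields $\sum_{|p|>P_{0},s}|c_{p,s}(t)|\le C_{1}\|g_{t}\|$. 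Thus the task reduces to showing $\|g_{t}\|$ is uniformly bounded in $t$; this is the main obstacle.

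To close the loop I would bootstrap as follows. The same denominator estimate applied to (52) yields
\[
\|\Psi_{k,t}^{(n)}\|^{2}=\sum_{p,s}|2\pi p+t|^{2n}|c_{p,s}(t)|^{2}\le C_{2}\|g_{t}\|^{2}+C_{3},
\]
since $|2\pi p+t|^{n}/|\lambda_{k}(t)-(2\pi pi+ti)^{n}|$ is uniformly bounded for $|p|>P_{0}$ and the low-frequency part contributes a finite constant. On the other hand, the assumption $p_{\nu,i,j}^{(n-\nu)}\in L_{2}[0,1]$ yields $P_{\nu}\in H^{n-\nu}[0,1]\subset L_{\infty}[0,1]$ for $2\le\nu\le n-1$ (by Sobolev embedding in one dimension), while $P_{n}\in L_{2}[0,1]$ is controlled against $\|\Psi_{k,t}\|_{\infty}$. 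Applying one-dimensional Gagliardo--Nirenberg interpolations of the form $\|\Psi_{k,t}^{(n-\nu)}\|_{L_{2}}\le C\|\Psi_{k,t}\|^{\nu/n}\|\Psi_{k,t}^{(n)}\|^{(n-\nu)/n}$ and $\|\Psi_{k,t}\|_{\infty}\le C\|\Psi_{k,t}\|^{1-1/(2n)}\|\Psi_{k,t}^{(n)}\|^{1/(2n)}+C\|\Psi_{k,t}\|$ gives a sub-linear bound
\[
\|g_{t}\|\le C_{4}\|\Psi_{k,t}^{(n)}\|^{\delta}+C_{5}
\]
for some $\delta<1$. Inserting the first inequality into the second produces $\|g_{t}\|\le C(\|g_{t}\|^{\delta}+1)$, which forces a uniform a priori estimate $\|g_{t}\|\le C_{6}$.

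Combining these bounds yields $|\Psi_{k,t}(x)|\le\sum_{p,s}|c_{p,s}(t)|\le(2P_{0}+1)m+C_{1}C_{6}=:c_{11}$ for all $t\in(t_{j}-\varepsilon,t_{j})\cup(t_{j},t_{j}+\varepsilon)$ and $x\in[0,1]$, which is (51). The delicate point throughout is the uniformity in $t$ as $t\to t_{j}$: it survives because $\lambda_{k}(t)$ stays bounded near $t_{j}$ (the only degeneration at $t_{j}$ is the collapse of several distinct eigenvalues into one, not a blow-up) and because the estimates above depend on $t$ only through $|\lambda_{k}(t)|$ and the normalization $\|\Psi_{k,t}\|=1$.
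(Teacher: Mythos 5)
Your proposal is correct in outline and shares the paper's skeleton: expand $\Psi_{k,t}$ in the orthonormal system $\{\varphi_{p,s,t}\}$, use (52) to force decay of the Fourier coefficients for large $|p|$ (both arguments need, and use, the boundedness of $\lambda_{k}(t)$ on the punctured compact neighborhood of $t_{j}$ so that $|\lambda_{k}(t)-(2\pi pi+ti)^{n}|\gtrsim|p|^{n}$ uniformly), and conclude via absolute summability of the coefficients. The difference is in how the numerator of (52) is estimated, and here the paper is much more economical. It integrates by parts $n-\nu$ times in $\left(P_{\nu}\Psi_{k,t}^{(n-\nu)},\varphi_{p,s,t}\right)$, transferring all derivatives from $\Psi_{k,t}$ onto $P_{\nu}^{\ast}\varphi_{p,s,t}$ (the boundary terms vanish by the quasi-periodicity of both factors); the hypothesis $p_{\nu,i,j}^{(n-\nu)}\in L_{2}[0,1]$ enters exactly here and yields $\left\vert\left(P_{\nu}\Psi_{k,t}^{(n-\nu)},\varphi_{p,s,t}\right)\right\vert\leq c\left\vert p\right\vert^{n-\nu}$ using only $\left\Vert\Psi_{k,t}\right\Vert=1$, hence $\left\vert\left(\Psi_{k,t},\varphi_{p,s,t}\right)\right\vert\leq c_{14}p^{-2}$ and summability, with no need to control any derivative of $\Psi_{k,t}$. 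You instead keep $g_{t}=\sum_{\nu}P_{\nu}\Psi_{k,t}^{(n-\nu)}$ intact and pay for it with an a priori bound on $\left\Vert\Psi_{k,t}^{(n)}\right\Vert$ via a Gagliardo--Nirenberg bootstrap; this works (the sublinear exponents close the loop, $\Psi_{k,t}$ lies in the domain of $L_{t}$ so $\left\Vert\Psi_{k,t}^{(n)}\right\Vert$ is finite to begin with, and the constants depend on $t$ only through $|\lambda_{k}(t)|$), but it imports interpolation machinery that the integration-by-parts trick makes unnecessary, and note that your first bootstrap inequality is just a Parseval restatement of $\Psi_{k,t}^{(n)}=\lambda_{k}(t)\Psi_{k,t}-g_{t}$. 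If you rewrite your high-frequency estimate by moving the $n-\nu$ derivatives onto $\varphi_{p,s,t}$ before applying Cauchy--Schwarz, the entire second half of your argument disappears.
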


\begin{proof}
It is clear that there exists $l\in\mathbb{N}$ such that
\[
|\lambda_{k}(t)-\left(  2\pi pi+it\right)  ^{n}|>|p|^{n}%
\]
for all $|p|\geq l$ and $t\in\left(  (t_{j}-\varepsilon,t_{j})\cup(t_{j}%
,t_{j}+\varepsilon)\right)  .$ On the other hand, using integration by parts
$n-v$ times and then the inclusion $p_{v,i,j}^{(n-v)}\in L_{2}[0,1]$ we
obtain
\[
\left\vert \left(  P_{v}\Psi_{k,t}^{(n-\nu)},\varphi_{p,s,t}\right)
\right\vert =\left\vert \left(  \Psi_{k,t}^{(n-\nu)},P_{v}^{\ast}%
\varphi_{p,s,t}\right)  \right\vert =\left\vert \left(  \Psi_{k,t},\left(
P_{2}^{\ast}\varphi_{p,s,t}\right)  ^{(n-v)}\right)  \right\vert \leq
\]%
\[
\left\Vert \left(  P_{2}^{\ast}\varphi_{p,s,t}\right)  ^{(n-v)}\right\Vert
<\left(  c_{13}\left\vert p\right\vert ^{n-v}\right)  .
\]
Therefore, by (52) we have
\[
\left\vert \left(  \Psi_{k,t},\varphi_{p,s,t}\right)  \right\vert
<\frac{c_{14}}{p^{2}}%
\]
for all $|p|\geq l$ and $t\in\left(  (t_{j}-\varepsilon,t_{j})\cup(t_{j}%
,t_{j}+\varepsilon)\right)  .$ This inequality with the Fourier decomposition
of $\Psi_{k,t}$ with respect to the basis $\left\{  \varphi_{p,s,t}%
:p\in\mathbb{Z},\text{ }s=1,2,...,m\right\}  $ yields (51).
\end{proof}

\begin{notation}
The set of the ESS is the subset of the set of the multiple eigenvalues and
equality (50) shows that we need consider the multiple eigenvalues of
$L_{t_{j}}$ which are the ESS. Therefore, for simplicity of notation and
without loss of generality, we assume that $\left\{  t_{1},t_{2}%
,...,t_{v(h)}\right\}  $ is the set of SQ lying in $[h,\pi-h]\cup\lbrack
\pi+h,2\pi-h].$ For each $j=1,2,...,v(h)$ let $\left\{  \Lambda_{s}%
(t_{j}):\text{ }s=1,2,...,s_{j}\right\}  $ be the set of ESS.
\end{notation}

If $\Lambda_{s}(t_{j})$ is an ESS, then some elements of the set
\begin{equation}
\left\{  a_{k}(t)\Psi_{k,t}(x):k\in\mathbb{T}(s,j)\right\}  \tag{53}%
\end{equation}
may become nonintegrable on $(t_{j}-\varepsilon,t_{j}+\varepsilon),$ while
some of elements of (53) may be integrable and the total sum of elements of
(53) is integrable due to the cancellations of the nonintegrable terms, where
\[
a_{k}(t)=(f_{t},X_{k,t})=\frac{1}{\alpha_{k}(t)}(f_{t},\Psi_{k,t}^{\ast}).
\]
In fact, we compound together only the nonintegrable elements of (53). Using
this argument we obtain the spectral expansion for the second summand of (41).

\begin{theorem}
If $f\in W,$ then
\begin{equation}
\int\limits_{(t_{j}-\varepsilon,t_{j}+\varepsilon)}f_{t}(x)dt=\sum
_{k\in\mathbb{B}(j)}\int\limits_{(t_{j}-\varepsilon,t_{j}+\varepsilon)}%
a_{k}(t)\Psi_{k,t}dt+\sum_{\left\vert k\right\vert \geq N(h)}^{{}}\sum
_{j=1}^{m}\int\limits_{(t_{j}-\varepsilon,t_{j}+\varepsilon)}a_{k,j}%
(t)\Psi_{k,j,t}dt+ \tag{54}%
\end{equation}%
\[
\sum_{s=1}^{s_{j}}\int\limits_{(t_{j}-\varepsilon,t_{j}+\varepsilon)}\left[
\sum_{k\in\mathbb{S}(s,j)}a_{k}(t)\Psi_{k,t}\right]  dt,
\]
where $\mathbb{B}(j)=\left\{  1,2,...,K(h)\right\}  \backslash\left(
{\textstyle\bigcup\nolimits_{s=1}^{s_{j}}}
\mathbb{S}(s,j)\right)  ,$
\begin{equation}
\int\limits_{(t_{j}-\varepsilon,t_{j}+\varepsilon)}\sum_{k\in\mathbb{S}%
(s,j)}a_{k}(t)\Psi_{k,t}dt=\lim_{\delta\rightarrow0}\sum_{k\in\mathbb{S}%
(s,j)}\int\limits_{\delta<\left\vert t-t_{j}\right\vert \leq\varepsilon}%
a_{k}(t)\Psi_{k,t}(x)dt \tag{55}%
\end{equation}
and the series in (54) converges in the $L_{2}^{m}(a,b)$ norm for any
$a,b\in\mathbb{R}$.
\end{theorem}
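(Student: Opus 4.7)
The plan is to start from the splitting $f_t=f_t^++f_t^-$ provided by Definition~1, treat each half on the appropriate semicircle via a Cauchy-theorem argument, apply term-by-term integration (Remark~1) along the semicircles, and then convert each summand back to a real interval integral using either equality (45) (for simple eigenvalues) or Theorem~4 equation~(48) (when the eigenvalue is an ESS).

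First, for $f^+\in W^+$, the function $f_t^+(x)$ is analytic in $D_+(h,\varepsilon)$ and continuous on its closure. The closed half-disk bounded by the semicircle $\gamma_+(t_j,\varepsilon)$ and the interval $(t_j-\varepsilon,t_j+\varepsilon)$ lies in $\overline{D_+(h,\varepsilon)}$, so Cauchy's theorem yields
\[
\int_{(t_j-\varepsilon,t_j+\varepsilon)}f_t^+(x)\,dt=\int_{\gamma_+(t_j,\varepsilon)}f_t^+(x)\,dt.
\]
The analogous identity, with $\gamma_-$ in the lower half-plane, holds for $f^-$. By Remark~1, the decomposition (10a) can be integrated term by term along $\gamma_+(t_j,\varepsilon)$; doing the same on $\gamma_-$ with (10b) and adding gives
\[
\int_{(t_j-\varepsilon,t_j+\varepsilon)}f_t\,dt
=\sum_{k=1}^{K(h)}I_k(x)+\sum_{|k|\geq N(h)}\sum_{j=1}^m J_{k,j}(x),
\]
where $I_k(x)=\int_{\gamma_+}a_k^+\Psi_{k,t}\,dt+\int_{\gamma_-}a_k^-\Psi_{k,t}\,dt$ and $J_{k,j}(x)$ is defined analogously for the double-indexed eigenvalues.

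Next I convert each $\gamma_\pm$-integral into a real interval integral. For $|k|\geq N(h)$, Summary~1$(b)$ guarantees that $\lambda_{k,j}(t_j)$ is simple, so (45) applies and $J_{k,j}(x)=\int_{(t_j-\varepsilon,t_j+\varepsilon)}a_{k,j}(t)\Psi_{k,j,t}(x)\,dt$. For $|k|<N(h)$, I partition $\{1,\dots,K(h)\}$ according to the relation $\lambda_k(t_j)=\Lambda_s(t_j)$: indices $k$ whose eigenvalue at $t_j$ is simple again yield $I_k(x)=\int_{(t_j-\varepsilon,t_j+\varepsilon)}a_k(t)\Psi_{k,t}(x)\,dt$ by (45); indices $k\in\mathbb{T}(s,j)$ for some ESS $\Lambda_s(t_j)$ I group together and apply Theorem~4 equation~(48) to both the $+$ and $-$ parts, which produces
\[
\sum_{k\in\mathbb{T}(s,j)}I_k(x)=\Bigl(\sum_{k\in\mathbb{B}(s,j)}\int_{(t_j-\varepsilon,t_j+\varepsilon)}a_k(t)\Psi_{k,t}(x)\,dt\Bigr)+F(s,j,x),
\]
with $F(s,j,x):=F^+(s,j,x)+F^-(s,j,x)=\int_{(t_j-\varepsilon,t_j+\varepsilon)}[\sum_{k\in\mathbb{S}(s,j)}a_k(t)\Psi_{k,t}(x)]\,dt$ in the sense of (49), giving (55). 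The simple-eigenvalue indices and the union $\bigcup_s\mathbb{B}(s,j)$ exhaust $\mathbb{B}(j)=\{1,\dots,K(h)\}\setminus\bigcup_s\mathbb{S}(s,j)$, so the pieces combine into (54).

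Finally, the $L_2^m(a,b)$-norm convergence of the series is inherited from Theorem~2: the estimate in Lemma~1, applied uniformly in $t$ on $\gamma_\pm(t_j,\varepsilon)$, shows that the $\sum_{|k|\geq s}\sum_j\int_{\gamma_\pm}a_{k,j}^\pm\Psi_{k,j,t}\,dt$ tail vanishes in $L_2^m(a,b)$, and (45) transfers this to the same tail on $(t_j-\varepsilon,t_j+\varepsilon)$. The principal obstacle is the legitimacy of the contour deformation in the first step, since it requires that the bracketing group $\sum_{k\in\mathbb{S}(s,j)}a_k^\pm\Psi_{k,t}$ be integrable along the real diameter of the half-disk (not merely on the curved boundary); this is precisely the content of Theorem~4, and combining the $+$ and $-$ halves relies on noting that $f_t=f_t^++f_t^-$ while $a_k=a_k^++a_k^-$ on the common real interval. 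The bookkeeping that cancels the nonintegrable summands inside the brackets (through the limit in (55)) is the other delicate point, but it follows directly from the absolute continuity of the Lebesgue integral stated in Theorem~4.
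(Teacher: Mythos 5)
Your proposal follows essentially the same route as the paper: deform the real interval to the semicircles $\gamma_{\pm}(t_{j},\varepsilon)$ using the analyticity of $f_{t}^{\pm}$ from Definition 1 (the paper's equality (56)), integrate term by term via Remark 1 (equalities (57)--(58)), then return each summand to the real interval via (45) for simple eigenvalues and via Theorem 4, equality (48), for the grouped indices $\mathbb{T}(s,j)$, exactly as in the paper's steps (59)--(63). The only addition is your explicit treatment of the $L_{2}^{m}(a,b)$ convergence of the tail, which the paper leaves implicit; this is a correct and harmless supplement, not a different argument.
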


\begin{proof}
By Definition 1 we have
\begin{equation}
\int\limits_{(t_{j}-\varepsilon,t_{j}+\varepsilon)}f_{t}(x)dt=\int
\limits_{\gamma_{+}(t_{j},\varepsilon)}f_{t}^{+}(x)dt+\int\limits_{\gamma
_{-}(t_{j},\varepsilon)}f_{t}^{-}(x)dt. \tag{56}%
\end{equation}
Moreover, it follows from Remark 1 that
\begin{equation}
\int\limits_{\gamma_{+}(t_{j},\varepsilon)}f_{t}^{+}(x)dt=\sum_{k=1}^{K(h)}%
{\displaystyle\int\limits_{\gamma_{+}(t_{j},\varepsilon)}}
(f_{t}^{+},X_{k,t}),\Psi_{k,t}dt+\sum_{\left\vert k\right\vert \geq N(h)}^{{}%
}\sum_{j=1}^{m}%
{\displaystyle\int\limits_{\gamma_{+}(t_{j},\varepsilon)}}
(f_{t}^{+},X_{k,j,t})\Psi_{k,j,t}dt \tag{57}%
\end{equation}
and
\begin{equation}
\int\limits_{\gamma_{-}(t_{j},\varepsilon)}f_{t}^{-}(x)dt=\sum_{k=1}^{K(h)}%
{\displaystyle\int\limits_{\gamma_{-}(t_{j},\varepsilon)}}
(f_{t}^{-},X_{k,t}),\Psi_{k,t}dt+\sum_{\left\vert k\right\vert \geq N(h)}^{{}%
}\sum_{j=1}^{m}%
{\displaystyle\int\limits_{\gamma_{-}(t_{j},\varepsilon)}}
(f_{t}^{-},X_{k,j,t})\Psi_{k,j,t}dt. \tag{58}%
\end{equation}
Thus, according to (56) the sum of the left-hand sides of (57) and (58) is
equal to the left-hand side of (54). Therefore, to prove (54), it suffices to
show that the sum of the right-hand sides of (57) and (58) is equal to the
right-hand side of (54). First consider the terms of the right-hand sides of
(57) and (58) with indices $\left\vert k\right\vert \geq N(h).$ By Summary
1$(b),$ the eigenvalues $\lambda_{k,j}(t)$ for $\left\vert k\right\vert \geq
N(h)$ and $t\in\gamma_{\pm}(t_{j},\varepsilon)$ are simple. Therefore,
according to (45) we have
\[%
{\displaystyle\int\limits_{\gamma_{\pm}(t_{j},\varepsilon)}}
(f_{t}^{\pm},X_{k,j,t})\Psi_{k,j,t}dt=%
{\displaystyle\int\limits_{(t_{j}-\varepsilon,t_{j}+\varepsilon)}}
(f_{t}^{\pm},X_{k,j,t})\Psi_{k,j,t}dt
\]
and
\begin{equation}%
{\displaystyle\int\limits_{\gamma_{+}(t_{j},\varepsilon)}}
(f_{t}^{+},X_{k,j,t})\Psi_{k,j,t}dt+%
{\displaystyle\int\limits_{\gamma_{-}(t_{j},\varepsilon)}}
(f_{t}^{-},X_{k,j,t})\Psi_{k,j,t}dt=%
{\displaystyle\int\limits_{(t_{j}-\varepsilon,t_{j}+\varepsilon)}}
(f_{t},X_{k,j,t})\Psi_{k,j,t}dt \tag{59}%
\end{equation}
for $\left\vert k\right\vert \geq N(h),$ since $f_{t}^{+}+f_{t}^{-}=f_{t}.$
Thus, to prove (54) it remains to show that
\begin{equation}
\sum_{k=1}^{K(h)}%
{\displaystyle\int\limits_{\gamma_{+}(t_{j},\varepsilon)}}
(f_{t}^{+},X_{k,t}),\Psi_{k,t}dt+\sum_{k=1}^{K(h)}%
{\displaystyle\int\limits_{\gamma_{-}(t_{j},\varepsilon)}}
(f_{t}^{-},X_{k,t}),\Psi_{k,t}dt= \tag{60}%
\end{equation}%
\[
\sum_{s=1}^{s_{j}}\int\limits_{(t_{j}-\varepsilon,t_{j}+\varepsilon)}\left(
\sum_{k\in\mathbb{S}(s,j)}a_{k}(t)\Psi_{k,t}\right)  dt+\sum_{k\in
\mathbb{B}(j)}\int\limits_{(t_{j}-\varepsilon,t_{j}+\varepsilon)}a_{k}%
(t)\Psi_{k,t}dt.
\]

The set $\left\{  1,2,...,K(h)\right\}  $ is the union of
\[
\mathbb{K}(j)=:\left\{  k:1\leq k<K(h),\lambda_{k}(t_{j})\text{ is a simple
eigenvalue}\right\}
\]
and $%
{\textstyle\bigcup\nolimits_{s=1}^{s_{j}}}
\mathbb{T}(s,j).$

If $k\in\mathbb{K}(j)$, then arguing as in the case $\left\vert k\right\vert
\geq N(h)$ (see above) we obtain
\begin{equation}
\sum_{k\in\mathbb{K}(j)}%
{\displaystyle\int\limits_{\gamma_{+}(t_{j},\varepsilon)}}
(f_{t}^{+},X_{k,t}),\Psi_{k,t}dt+\sum_{k\in\mathbb{K}(j)}%
{\displaystyle\int\limits_{\gamma_{-}(t_{j},\varepsilon)}}
(f_{t}^{-},X_{k,t}),\Psi_{k,t}dt=\sum_{k\in\mathbb{K}(j)}\int\limits_{(t_{j}%
-\varepsilon,t_{j}+\varepsilon)}a_{k}(t)\Psi_{k,t}dt. \tag{61}%
\end{equation}
If $k\in\mathbb{T}(s,j),$ then using ( 48) we get
\begin{equation}
\sum_{k\in\mathbb{T}(s,j)}\int\limits_{\gamma_{+}(t_{j},\varepsilon)}a_{k}%
^{+}(t)\Psi_{k,t}(x)dt+\sum_{k\in\mathbb{T}(s,j)}\int\limits_{\gamma_{-}%
(t_{j},\varepsilon)}a_{k}^{-}(t)\Psi_{k,t}(x)dt= \tag{62}%
\end{equation}%
\[
F(s,j,x)+\sum_{k\in\mathbb{B}(s,j)}\int\limits_{(t_{j}-\varepsilon
,t_{j}+\varepsilon)}a_{k}(t)\Psi_{k,t}(x)dt,
\]
where%
\begin{equation}
F(s,j,x)=\int\limits_{(t_{j}-\varepsilon,t_{j}+\varepsilon)}\sum
_{k\in\mathbb{S}(s,j)}a_{k}(t)\Psi_{k,t}dt. \tag{63}%
\end{equation}
Thus (61) and (62) imply (60) which completes the prove of (54). \ Note that
(55) follows from the absolute continuity of the Lebesque integral.
\end{proof}

\ Now we consider the integral over $(-h,h)$ in (41). The consideration of
integral over $(\pi-h,\pi+h)$ is similar. The complexity of the investigations
of the integral over $(-h,h)$ is the following. In general, in the domain
enclosed by $\gamma(0,h)$ and $(-h,h)$ there exist infinite number of points
of $A$ and the interval $(-h,h)$ may contain, infinite number of SQ defined in
Definition 2. That is why, the set of indices $k$ for which $\frac{1}%
{\alpha_{k}}$ is nonintegrable over $(-h,h)$ is not finite in the general case
and may coincide with the set of all indices. In other words, all terms of the
series (10), (10a) and (10b) may become nonintegrable on $(-h,h).$ This
situation complicates the replacement of $\gamma(0,h)$ by $[-h,h]$.

Thus, we need to consider in detail the intervals $(-h,h)$ and $(\pi-h,\pi
+h)$. First, we prove that the disks $\{\lambda\in\mathbb{C}:\left\vert
\lambda-(2\pi ki)^{n}\right\vert <\left\vert k\right\vert ^{n-1}\}$ and
$\{\lambda\in\mathbb{C}:\left\vert \lambda-(2\pi ki+i\pi)^{n}\right\vert
<\left\vert k\right\vert ^{n-1}\}$ for the large positive value of $k$
contains $2m$ eigenvalues (counting the multiplicity) of the operator $L_{t}$
for $\left\vert t\right\vert \leq h$ and $\left\vert t-\pi\right\vert \leq h$
respectively. To do this, we use the following formula
\begin{equation}
\left(  \lambda(t)-\left(  2\pi ki+ti\right)  ^{n}\right)  \left(
\Psi_{\lambda(t)},\varphi_{k,s,t}^{\ast}\right)  =\left(  z\sum\limits_{\nu
=2}^{n}P_{\nu}\Psi_{\lambda(t)}^{(n-\nu)},\varphi_{k,s,t}^{\ast}\right)  ,
\tag{64}%
\end{equation}
where $\Psi_{\lambda(t)}$ is a normalized eigenfunction of $L_{t,z}$
corresponding to the eigenvalue $\lambda(t)$,
\[
L_{t,z}=L_{t}(0)+z(L_{t}-L_{t}(0)),\text{ }0\leq z\leq1,
\]
$L_{t}(0)$ denotes the case when all coefficients of (1) are zero,
\begin{equation}
\varphi_{k,s,t}^{\ast}(x)=e_{s}e^{i\left(  2\pi k+\overline{t}\right)  x}
\tag{65}%
\end{equation}
and $\left\{  e_{s}:s=1,2,...,m\right\}  $ is the standard basis of
$\mathbb{C}^{m}.$ Formula (64) can be obtained from $L_{t,z}\Psi_{\lambda
(t)}=\lambda(t)\Psi_{\lambda(t)}$ by multiplying by $\varphi_{k,s,t}^{\ast
}(x)$.

\begin{theorem}
There exists a positive integer $N(0,h)$ such that all eigenvalues of the
operator $L_{t,z}$ lie in the union of the disks
\begin{equation}
\{\lambda\in\mathbb{C}:\left\vert \lambda-(2\pi ki)^{n}\right\vert <\left(
N(0,h)\right)  ^{n-1}\} \tag{66}%
\end{equation}
for $0\leq k<N(0,h)$ and
\begin{equation}
\{\lambda\in\mathbb{C}:\left\vert \lambda-(2\pi ki)^{n}\right\vert <k^{n-1}\}
\tag{67}%
\end{equation}
for $k\geq N(0,h),$ where $\left\vert t\right\vert \leq h<\frac{1}{15\pi}$,
$z\in\lbrack0,1]$.
\end{theorem}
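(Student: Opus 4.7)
The plan is to use formula (64) as a quantitative substitute for a Fourier eigenfunction expansion: if $\lambda(t)$ is any eigenvalue of $L_{t,z}$ and $\Psi_{\lambda(t)}$ a corresponding normalized eigenfunction, (64) controls the size of the coefficients $(\Psi_{\lambda(t)},\varphi_{k,s,t}^{\ast})$. The proof will be by contradiction: if $\lambda(t)$ avoided every disk in (66)-(67), then every such coefficient would be so small that the Riesz-basis expansion of $\Psi_{\lambda(t)}$ in the system $\{\varphi_{k,s,t}^{\ast}\}$ would have total squared norm strictly less than $\|\Psi_{\lambda(t)}\|^{2}=1$.

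The first step is to derive a bound on the right-hand side of (64) uniform in $z\in[0,1]$, in $|t|\le h$ and in the eigenvalue. For each $\nu\in\{2,\dots,n\}$ I would shift the $n-\nu$ derivatives from $\Psi_{\lambda(t)}$ onto $P_{\nu}^{\ast}\varphi_{k,s,t}^{\ast}$ by integration by parts, the boundary terms vanishing thanks to the quasiperiodicity of $\Psi_{\lambda(t)}$ (condition (3)) and of $\varphi_{k,s,t}^{\ast}$. Applying the Leibniz rule to $(P_{\nu}^{\ast}\varphi_{k,s,t}^{\ast})^{(n-\nu)}$, each summand is controlled in $L_{2}$ by the product of an $L_{2}$ norm of some derivative of $P_{\nu}^{\ast}$ (finite by the hypothesis $p_{\nu,i,j}^{(n-\nu)}\in L_{2}[0,1]$) and a power of $|2\pi k+\bar t|$ of total degree at most $n-\nu$. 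Cauchy-Schwarz then yields $|(P_{\nu}\Psi_{\lambda(t)}^{(n-\nu)},\varphi_{k,s,t}^{\ast})|\le C_{\nu}(1+|k|)^{n-\nu}$. Summing over $\nu$ and absorbing $z\in[0,1]$, (64) produces the master inequality
\[
|\lambda(t)-(2\pi ki+ti)^{n}|\,|(\Psi_{\lambda(t)},\varphi_{k,s,t}^{\ast})|\le M(1+|k|)^{n-2}.
\]

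The second step is to re-center the disks. A binomial expansion yields $|(2\pi ki+ti)^{n}-(2\pi ki)^{n}|\le A(n,h)(1+|k|)^{n-1}$, with $A(n,h)<1$ for $h<1/(15\pi)$. Assume, for contradiction, that $\lambda(t)$ lies outside every disk in (66)-(67). Then for $|k|\ge N(0,h)$ the hypothesis $|\lambda(t)-(2\pi ki)^{n}|\ge|k|^{n-1}$ gives $|\lambda(t)-(2\pi ki+ti)^{n}|\ge(1-A(n,h))|k|^{n-1}$, while for $|k|<N(0,h)$ the hypothesis $|\lambda(t)-(2\pi ki)^{n}|\ge N(0,h)^{n-1}$ gives $|\lambda(t)-(2\pi ki+ti)^{n}|\ge(1-A(n,h))N(0,h)^{n-1}$. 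Inserting these into the master inequality yields
\[
|(\Psi_{\lambda(t)},\varphi_{k,s,t}^{\ast})|\le\frac{M'}{|k|}\;\;(|k|\ge N(0,h)),\qquad|(\Psi_{\lambda(t)},\varphi_{k,s,t}^{\ast})|\le\frac{M'(1+|k|)^{n-2}}{N(0,h)^{n-1}}\;\;(|k|<N(0,h)).
\]

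Since $\{\varphi_{k,s,t}^{\ast}\}_{k\in\mathbb{Z},\,s=1,\dots,m}$ is a Riesz basis of $L_{2}^{m}(0,1)$ uniformly in $|t|\le h$ (it is the image of the orthonormal basis (28) under multiplication by the bounded, boundedly invertible operator of multiplication by $e^{i\bar{t}x}$), the lower Riesz inequality bounds $\|\Psi_{\lambda(t)}\|^{2}$ from above by a constant times $\sum_{k,s}|(\Psi_{\lambda(t)},\varphi_{k,s,t}^{\ast})|^{2}$. The tail $\sum_{|k|\ge N(0,h)}|k|^{-2}$ is $O(1/N(0,h))$ and the head $\sum_{|k|<N(0,h)}(1+|k|)^{2(n-2)}/N(0,h)^{2(n-1)}$ is also $O(1/N(0,h))$, so taking $N(0,h)$ sufficiently large makes the total strictly less than $1$, contradicting $\|\Psi_{\lambda(t)}\|=1$. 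Hence $\lambda(t)$ must lie in the union of the disks. The main technical obstacle is the uniform integration-by-parts bound in the first step: the regularity hypothesis $p_{\nu,i,j}^{(n-\nu)}\in L_{2}$ is exactly enough to transfer $n-\nu$ derivatives and produce the dominant $(1+|k|)^{n-2}$ factor (arising from $\nu=2$), which beats the disk radius $|k|^{n-1}$ by precisely one power of $|k|$ and is what produces a summable tail.
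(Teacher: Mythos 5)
Your proposal is correct and follows essentially the same route as the paper: a proof by contradiction using identity (64), integration by parts $n-\nu$ times (justified by $p_{\nu,i,j}^{(n-\nu)}\in L_{2}[0,1]$) to bound the numerators by $c|k|^{n-\nu}$, a split into the cases $|k|\geq N(0,h)$ and $|k|<N(0,h)$ with the disk-avoidance hypothesis lower-bounding the denominators, and a Parseval/Riesz-basis argument showing the resulting $O(1/N(0,h))$ sum contradicts the normalization of $\Psi_{\lambda(t)}$. Your explicit binomial re-centering of the disks and the Riesz-basis phrasing are just slightly more detailed versions of steps the paper treats implicitly via the Parseval equality for $e^{-itx}\Psi_{\lambda(t)}$.
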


\begin{proof}
Suppose the contrary, that there exists an eigenvalue $\lambda(t)$ lying
outside all disks (66) and (67). Using Parseval's equality for the orthonormal
basis $\left\{  e_{s}e^{i2\pi kx}:k\in\mathbb{Z},\text{ }s=1,2,...,m\right\}
$ and (64), we obtain
\begin{equation}
\left\Vert e^{-itx}\Psi_{\lambda(t)}\right\Vert =\sum_{\substack{k\in
\mathbb{Z},\text{ }\\s=1,2,...,m}}\left\vert \left(  \Psi_{\lambda(t)}%
,\varphi_{k,s,t}^{\ast}\right)  \right\vert ^{2}\leq\sum_{\substack{k\in
\mathbb{Z},\text{ }\\s=1,2,...,m}}\frac{\left\vert \left(  z\sum
\limits_{\nu=2}^{n}P_{\nu}\Psi_{\lambda(t)}^{(n-\nu)},\varphi_{k,s,t}^{\ast
}\right)  \right\vert ^{2}}{\left\vert \lambda(t)-(2\pi ki+it)^{n}\right\vert
^{2}}. \tag{68}%
\end{equation}
Now, in order to get a contradiction, we prove that the right-hand side of
(68) is small for a large value of $N(0,h)$.

Consider the cases: $\left\vert k\right\vert \geq N(0,h)$ and $\left\vert
k\right\vert <N(0,h)$ separately. First, consider the case $\left\vert
k\right\vert \geq N(0,h).$ Since $\lambda(t)$ lies outside all the disks (67),
the inequality%
\[
\left\vert \lambda(t)-(2k\pi i+it)^{n}\right\vert \geq\frac{2}{3}\left\vert
k\right\vert ^{n-1}%
\]
holds for all $\left\vert k\right\vert \geq N(0,h)$ and $\left\vert
t\right\vert <\frac{1}{15\pi}.$ On the other hand, using integration by parts
$n-v$ times and then the inclusion $p_{v,i,j}^{(n-v)}\in L_{2}[0,1]$ we
obtain
\begin{equation}
\left\vert \left(  P_{v}\Psi_{\lambda(t)}^{(n-\nu)},\varphi_{k,s,t}^{\ast
}\right)  \right\vert ^{2}=\left\vert \left(  \Psi_{\lambda(t)}^{(n-\nu
)},P_{v}^{\ast}\varphi_{k,s,t}^{\ast}\right)  \right\vert ^{2}=\left\vert
\left(  \Psi_{\lambda(t)},\left(  P_{2}^{\ast}\varphi_{k,s,t}^{\ast}\right)
^{(n-v)}\right)  \right\vert ^{2}\leq\tag{69}%
\end{equation}%
\[
\left\Vert \left(  P_{2}^{\ast}\varphi_{k,s,t}^{\ast}\right)  ^{(n-v)}%
\right\Vert ^{2}<\left(  c_{15}\left\vert k\right\vert ^{n-v}\right)  ^{2}.
\]
Therefore, we have
\begin{equation}
\sum_{\substack{\left\vert k\right\vert \geq N(0,h),\text{ }\\s=1,2,...,m}%
}\frac{\left\vert \left(  \sum\limits_{\nu=2}^{n}P_{\nu}\Psi_{\lambda
(t)}^{(n-\nu)},\varphi_{k,s,t}^{\ast}\right)  \right\vert ^{2}}{\left\vert
\lambda(t)-(2k\pi i+it)^{n}\right\vert ^{2}}<c_{16}\sum_{\left\vert
k\right\vert \geq N(0,h)}\frac{1}{\left\vert k\right\vert ^{2}}<\frac{c_{17}%
}{N(0,h)}. \tag{70}%
\end{equation}

Now consider the case $\left\vert k\right\vert <N(0,h).$ Since $\lambda(t)$
lies outside all disks (66) we have
\begin{equation}
\left\vert \lambda(t)-(2k\pi i+it)^{n}\right\vert \geq\frac{2}{3}\left(
N(0,h)\right)  ^{n-1} \tag{71}%
\end{equation}
for $\left\vert k\right\vert <N(0,h)$ and $\left\vert t\right\vert <\frac
{1}{15\pi}.$ Therefore, arguing as in the proof of (69), we get
\begin{equation}
\left\vert \left(  P_{v}\Psi_{\lambda(t)}^{(n-\nu)},\varphi_{k,s,t}^{\ast
}\right)  \right\vert ^{2}<\left(  c_{18}\left(  N(0,h)\right)  ^{n-v}\right)
^{2} \tag{72}%
\end{equation}
for $\left\vert k\right\vert <N(0,h).$ Using (71) and (72) and taking into
account that the number of $k$ satisfying the inequality $\left\vert
k\right\vert <N(0,h)$ is $2N(0,h)-1,$ we obtain
\[
\sum_{\substack{\left\vert k\right\vert <N(0,h),\text{ }\\s=1,2,...,m}%
}\frac{\left\vert \left(  \sum\limits_{\nu=2}^{n}P_{\nu}\Psi_{\lambda
(t)}^{(n-\nu)},\varphi_{k,s,t}^{\ast}\right)  \right\vert ^{2}}{\left\vert
\lambda(t)-(2k+t)^{n}\right\vert ^{2}}<\frac{c_{19}}{N(0,h)}.
\]
This inequality with (70) implies that the right-hand side of (68) is a small
number for the large values of $N(0,h).$ This contradiction provides the proof
of the theorem.
\end{proof}

In the same way we prove the following.

\begin{theorem}
There exists a positive integer $N(\pi,h)$ such that all eigenvalues of the
operator $L_{t,z}$ lie in the union of the disks
\begin{equation}
\{\lambda\in\mathbb{C}:\left\vert \lambda-(i2\pi k+i\pi)^{n}\right\vert
<\left(  N(\pi,h)\right)  ^{n-1}\} \tag{73}%
\end{equation}
for $0\leq k<N(\pi,h)$ and
\begin{equation}
\{\lambda\in\mathbb{C}:\left\vert \lambda-(i2\pi k+i\pi)^{n}\right\vert
<k^{n-1}\} \tag{74}%
\end{equation}
for $k\geq N(\pi,h),$ where $\left\vert t-\pi\right\vert \leq h<\frac{1}%
{15\pi}$, $z\in\lbrack0,1].$
\end{theorem}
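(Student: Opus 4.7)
The plan is to mirror the proof of Theorem 6 verbatim, with the disks recentered from $(2\pi ki)^n$ to $(2\pi ki + i\pi)^n$. Proceeding by contradiction, I assume there is an eigenvalue $\lambda(t)$ of $L_{t,z}$ with $|t-\pi|\leq h$ lying outside every disk in (73) and (74). Applying formula (64) (which is derived directly from the eigenvalue equation $L_{t,z}\Psi_{\lambda(t)}=\lambda(t)\Psi_{\lambda(t)}$ and is valid for all $t$) together with Parseval's identity for the orthonormal basis $\{e_{s}e^{i2\pi kx}\}$ applied to $e^{-itx}\Psi_{\lambda(t)}$, I obtain the analogue of (68):
\[
\|e^{-itx}\Psi_{\lambda(t)}\|^{2}\leq\sum_{\substack{k\in\mathbb{Z}\\s=1,\ldots,m}}\frac{|(z\sum_{\nu=2}^{n}P_{\nu}\Psi_{\lambda(t)}^{(n-\nu)},\varphi_{k,s,t}^{\ast})|^{2}}{|\lambda(t)-(2\pi ki+ti)^{n}|^{2}}.
\]

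The only new wrinkle is that the denominators in this bound involve $(2\pi ki+ti)^{n}$, whereas the disks in (73), (74) are centered at $(2\pi ki+i\pi)^{n}$. To handle this, I would first establish the elementary estimate
\[
\bigl|(2\pi ki+ti)^{n}-(2\pi ki+i\pi)^{n}\bigr|\leq c\,|k|^{n-1}h
\]
for $|k|\geq 1$ and $|t-\pi|\leq h$, by the mean value theorem applied to $z\mapsto z^{n}$. Since $h<\tfrac{1}{15\pi}$, for $|k|\geq N(\pi,h)$ the hypothesis that $\lambda(t)$ lies outside the disk (74) gives $|\lambda(t)-(2\pi ki+i\pi)^{n}|\geq |k|^{n-1}$, and combining with the shift estimate yields $|\lambda(t)-(2\pi ki+ti)^{n}|\geq\tfrac{2}{3}|k|^{n-1}$. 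Similarly, for $|k|<N(\pi,h)$, being outside (73) forces $|\lambda(t)-(2\pi ki+ti)^{n}|\geq\tfrac{2}{3}(N(\pi,h))^{n-1}$, once $N(\pi,h)$ is taken large enough that $N(\pi,h)^{n-1}$ dominates the shift $c\cdot N(\pi,h)^{n-1}h$.

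With these two lower bounds in hand, the rest of the argument is verbatim that of Theorem 6: the Sobolev-type bound (69) obtained by integration by parts $n-\nu$ times and the inclusion $p_{\nu,i,j}^{(n-\nu)}\in L_{2}[0,1]$ gives
\[
|(P_{\nu}\Psi_{\lambda(t)}^{(n-\nu)},\varphi_{k,s,t}^{\ast})|^{2}\leq(c|k|^{n-\nu})^{2},\qquad |(P_{\nu}\Psi_{\lambda(t)}^{(n-\nu)},\varphi_{k,s,t}^{\ast})|^{2}\leq(cN(\pi,h)^{n-\nu})^{2}
\]
in the respective ranges. Splitting the Parseval sum at $|k|=N(\pi,h)$, the large-$k$ tail is dominated by a tail of $\sum |k|^{-2}$ and is thus $O(1/N(\pi,h))$, while the finite part (of $2N(\pi,h)-1$ terms) is also $O(1/N(\pi,h))$, producing a right-hand side in the Parseval inequality that can be made arbitrarily small by choosing $N(\pi,h)$ large. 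This contradicts the fact that $\|e^{-itx}\Psi_{\lambda(t)}\|$ is bounded below (close to $1$) for $|\operatorname{Im}t|\leq h$ small.

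The main obstacle, such as it is, lies entirely in the recentering step: one must verify that the hypothesis $h<\tfrac{1}{15\pi}$ is small enough that the translation of the denominators from $(2\pi ki+i\pi)^{n}$ to $(2\pi ki+ti)^{n}$ only absorbs a constant factor (here $1/3$) of the available radius, uniformly in $k$ and in $z\in[0,1]$. Once that is checked, nothing in the integration-by-parts estimate or in the counting of indices depends on whether the quasimomentum is near $0$ or near $\pi$, and the contradiction is reached by the same choice of large $N(\pi,h)$.
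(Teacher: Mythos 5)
Your proposal is correct and follows essentially the same route as the paper, which proves Theorem 7 simply by declaring it "the same way" as Theorem 6: contradiction via formula (64), Parseval's identity, the integration-by-parts bound (69), and splitting the sum at $N(\pi,h)$. Your explicit treatment of the recentering of the denominators from $(2\pi ki+i\pi)^{n}$ to $(2\pi ki+ti)^{n}$ is in fact more careful than the paper, which absorbs the analogous shift (from $(2\pi ki)^{n}$ to $(2\pi ki+it)^{n}$) silently into the factor $\tfrac{2}{3}$ in the proof of Theorem 6.
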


Now using Theorems 6 and 7 we obtain the following results.

\begin{theorem}
$(a)$ The disks (67) and (74) for $k\geq N(0,h)$ and $k\geq N(\pi,h)$ contain
$2m$ eigenvalues of the operator $L_{t}$ for $\left\vert t\right\vert \leq h$
and $\left\vert t-\pi\right\vert \leq h$ respectively.

$(b)$ There are closed curves $\Gamma(0)$ and $\Gamma(\pi)$ such that they
enclose all the eigenvalues of $L_{t}$ for $|t|\leq h$ and $\left\vert
t-\pi\right\vert \leq h$ that are absent in (67) and (74) respectively. The
numbers of \ the eigenvalues of $L_{t}$ for $\left\vert t\right\vert \leq h$
and $\left\vert t-\pi\right\vert \leq h$ lying inside $\Gamma(0)$ and
$\Gamma(\pi)$ are equal to $K(0,h)$ and $K(\pi,h)$ respectively, where
$K(0,h)=(2N(0,h)-1)m$ and $K(\pi,h)=2mN(\pi,h).$
\end{theorem}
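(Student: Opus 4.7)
The plan is to exploit the analytic family $L_{t,z}$ already invoked in the proof of Theorem 6 (which interpolates between the constant-coefficient operator $L_t(0)$ at $z=0$ and the full operator $L_t$ at $z=1$), count eigenvalues explicitly at $z=0$, and then propagate the count to $z=1$ by a Rouché/Kato-type continuity argument for the Riesz (total) projection. The crucial separation statement that keeps the count invariant under the homotopy is exactly what Theorem 6 supplies: at every $z\in[0,1]$ the spectrum of $L_{t,z}$ is trapped inside the union of the disks (66)--(67) (respectively (73)--(74)), so no eigenvalue ever crosses a disk boundary.

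First I would compute the spectrum of $L_t(0)$ explicitly. Its eigenvalues are $(2\pi k'i + ti)^n$, each of geometric multiplicity $m$, with eigenspaces spanned by $e_s e^{i(2\pi k'+t)x}$ for $s=1,\ldots,m$. I would then show that for $|t|\leq h<\tfrac{1}{15\pi}$ and $k\geq N(0,h)$ exactly $2m$ of these unperturbed eigenvalues lie inside the disk (67): the $m$ eigenvalues coming from $k'=k$ and the $m$ eigenvalues coming from $k'=-k$. In the relevant (even-$n$) setting $(2\pi(-k)i)^n=(2\pi ki)^n$, and a direct expansion gives
\[
|(2\pi(\pm k)i + ti)^n - (2\pi ki)^n| \leq c\,|t|\,k^{n-1} < k^{n-1}
\]
once $h$ is small enough, which is ensured by $h<1/(15\pi)$ (enlarging $N(0,h)$ if needed). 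All other unperturbed eigenvalues are excluded because consecutive centres $(2\pi ki)^n$ and $(2\pi(k\pm 1)i)^n$ are separated by a distance of order $k^{n-1}$ that dominates the radius of~(67).

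Second I would transfer the count to $z=1$ via Riesz projections. By Theorem 6 the boundary circle $\{|\lambda-(2\pi ki)^n|=k^{n-1}\}$ is contained in the resolvent set of $L_{t,z}$ for every $z\in[0,1]$, so the projection
\[
P_k(z) = -\frac{1}{2\pi i}\int_{|\lambda-(2\pi ki)^n|=k^{n-1}} (L_{t,z}-\lambda)^{-1}\,d\lambda
\]
is norm-continuous in $z$, has finite-dimensional range, and therefore $\operatorname{rank}P_k(z)$ is an integer-valued continuous function of $z$, hence constant. Evaluating at $z=0$ gives rank $2m$, which yields (a). The argument near $t=\pi$ is identical, using the pairing $k'\leftrightarrow -1-k'$ which produces the common centre $(\pi(2k+1)i)^n$.

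For part (b), take $\Gamma(0)$ (respectively $\Gamma(\pi)$) to be any simple closed contour lying in the resolvent set of $L_{t,z}$ for all $z\in[0,1]$ that encloses exactly the finitely many disks (66) (respectively (73)) and none of (67) (respectively (74)); such a contour exists because Theorem 6 separates the ``small-$k$'' disks from the ``large-$k$'' ones. Applying the same Riesz-projection continuity argument to $\Gamma(0)$ as a whole shows that the number of eigenvalues of $L_t$ inside $\Gamma(0)$ equals the number of eigenvalues of $L_t(0)$ inside $\Gamma(0)$, which by enumeration is $m + 2m(N(0,h)-1) = (2N(0,h)-1)m = K(0,h)$ (a single centre $k'=0$ of multiplicity $m$ together with the pairs $\{k',-k'\}$ for $1\leq k' < N(0,h)$). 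Near $\pi$ the pairs $\{k',-1-k'\}$ for $0\leq k'<N(\pi,h)$ give $2mN(\pi,h) = K(\pi,h)$. The main technical obstacle is the explicit verification at $z=0$ that the disks are pairwise disjoint and that no stray unperturbed eigenvalue from an index $k'$ other than $\pm k$ enters the disk of radius $k^{n-1}$ about $(2\pi ki)^n$; this is a routine but careful distance estimate, and is the only place where the precise bound $h<1/(15\pi)$ is exercised.
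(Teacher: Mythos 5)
Your proof is correct and follows essentially the same route as the paper: Theorem 6 keeps the disk boundaries (and the curves $\Gamma(0)$, $\Gamma(\pi)$) in the resolvent set of the holomorphic family $L_{t,z}$ for all $z\in[0,1]$, so the rank of the associated Riesz projection is constant in $z$, and the eigenvalue count is read off at $z=0$. The only difference is one of detail: you spell out the $z=0$ enumeration (the pairing $k'\leftrightarrow -k'$, respectively $k'\leftrightarrow -1-k'$, of unperturbed eigenvalues sharing a common centre, which is also where your even-$n$ remark correctly locates the source of the factor $2m$) that the paper dismisses as ``obvious,'' and you write out the projection-continuity argument that the paper delegates to [3, Chap.\ 7].
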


\begin{proof}
The proof of the theorem is given for the case $\left\vert t\right\vert \leq
h.$ It is the same for the case $\left\vert t-\pi\right\vert \leq h$.

$(a)$ It follows from Theorem 6 that the boundary of the disks (67) lies in
the resolvent set of the operators $L_{t,z}$ for all $z\in\lbrack0,1].$
Therefore, taking into account that the family $L_{t,z}$ is halomorphic with
respect to $z,$ we obtain that the number of eigenvalues (counting the
multiplicity) of the operators $L_{t,0}=L_{t}(0)$ and $L_{t,1}=L_{t}$ lying
inside (67) are the same (see [3, Chap. 7]). Now the proof follows from the
obvious fact that the operator $L_{t,0}$ has only $2m$ eigenvalues lying
inside (67).

$(b)$ One can easily verify that there exists a closed curve $\Gamma(0)$ which
encloses all the disks in (66) and has no intersection points with the disks
of (67). This with the Theorem 6 implies that $\Gamma(0)$ lies in the
resolvent set of $L_{t,z}$ for all $z\in\lbrack0,1]$ and contains all
eigenvalues of $L_{t}$ apart from the eigenvalues lying in (67). It means that
apart from the eigenvalues lying inside (67) there exist $K(0,h)$ eigenvalues
of the operator $L_{t}$ for $0<|t|\leq h,$ since $\Gamma(0)$ contains $K(0,h)$
eigenvalues of the operator $L_{t,0}.$
\end{proof}

\begin{notation}
For $|t|\leq h$ denote the eigenvalues of the operator $L_{t}$ lying in (67)
by
\begin{equation}
\lambda_{k,1}(t),\text{ }\lambda_{k,2}(t),...,\lambda_{k,m}(t),\text{ }%
\lambda_{-k,1}(t),\text{ }\lambda_{-k,2}(t),...,\lambda_{-k,m}(t), \tag{75}%
\end{equation}
where $k\geq N(0,h).$ The eigenvalues of $L_{t}$ lying inside $\Gamma(0)$ are
denoted by $\lambda_{k}(t)$ for \ $k\in\mathbb{N}(0),$ where $\mathbb{N}%
(0)=\left\{  1,2,...,K(0,h\right\}  .$

Similarly, for $|t-\pi|\leq h$ denote the eigenvalues of the operator $L_{t}$
lying in (74) by
\begin{equation}
\lambda_{k,1}(t),\text{ }\lambda_{k,2}(t),...,\lambda_{k,m}(t),\text{ }%
\lambda_{-k-1,1}(t),\text{ }\lambda_{-k-1,2}(t),...,\lambda_{-k-1,m}(t),
\tag{76}%
\end{equation}
where $k\geq N(\pi,h)$. The eigenvalues of $L_{t}$ lying inside $\Gamma(\pi)$
are denoted by $\lambda_{k}(t)$ for $k\in\mathbb{N}(\pi),$ where
$\mathbb{N}(\pi)=\left\{  1,2,...,K(\pi,h\right\}  .$
\end{notation}

\begin{remark}
The eigenvalues \ (75) may become multiple eigenvalues and ESS and hence all
expressions $a_{s,j}(t)\Psi_{s,j,t}$ for $s=k,-k$ and $j=1,2,...,m$ may become
nonintegrable. This situation requires to compound together these expressions.
This is discussed in detail in conclusion Section 4.
\end{remark}

Now we are ready to consider the intervals $[-h,h]$ and $[\pi-h,\pi+h].$
Instead of the circle $C(t_{j})$ used in the proof of Theorem 3 \ taking the
circle%
\[
C(0):=\left\{  z\in\mathbb{C}:\left\vert z-(i2k\pi)^{n}\right\vert
=k^{n-1}\right\}  ,
\]
using Notation 2 and repeating the proof of Theorem 3 we obtain the following result.

\begin{proposition}
For each $k\geq N(0,h)$ the equality
\begin{equation}
\sum_{s=k,-k}^{{}}\sum_{j=1}^{m}\int\limits_{\gamma_{\pm}(0,h)}a_{s,j}^{\pm
}(t)\Psi_{s,j,t}(x)dt=\int\limits_{[-h,h]}\sum_{s=k,-k}^{{}}\sum_{j=1}%
^{m}a_{s,j}^{\pm}(t)\Psi_{s,j,t}(x)dt \tag{77}%
\end{equation}
holds.
\end{proposition}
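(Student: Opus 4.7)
The plan is to mimic the argument of Theorem~3, replacing the circle $C(t_j)$ by the boundary
$$C(0) := \{\lambda\in\mathbb{C}\colon |\lambda-(i2\pi k)^n|=k^{n-1}\}$$
of the disk in (67). First I would invoke Theorem~6 (with $z=1$) to conclude that $C(0)$ lies in the resolvent set of $L_t$ for every $t$ with $|t|\leq h$, and Theorem~8(a) to identify the $2m$ eigenvalues enclosed by $C(0)$ as exactly $\lambda_{\pm k,1}(t),\ldots,\lambda_{\pm k,m}(t)$. Using the Green-function representation of the resolvent [7, Chap.~3, formula~(8)], I would introduce the total projection applied to $f_t^+$:
$$U_k^+(t,x):=\frac{-1}{2\pi i}\int_{C(0)}\int_0^1 G(x,\xi,\lambda,t)\,f_t^+(\xi)\,d\xi\,d\lambda,$$
and note that whenever the $2m$ enclosed eigenvalues are all simple, the standard expansion of $G$ along the root functions yields $U_k^+(t,x)=\sum_{s=k,-k}\sum_{j=1}^{m}a_{s,j}^+(t)\Psi_{s,j,t}(x)$, which is the integrand appearing in (77).

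The key step is to show that $U_k^+(\cdot,x)$ is holomorphic on the open upper semidisk $D:=\{|t|<h,\,\operatorname{Im}t>0\}$ and continuous on $\overline{D}$. Holomorphy of $t\mapsto(L_t-\lambda)^{-1}$ for $\lambda\in C(0)$ follows from standard analytic-perturbation theory [3, Chap.~7] together with the fact (guaranteed by Theorem~6) that $C(0)$ never meets the spectrum of $L_t$ as $t$ varies over $\overline{D}$; Definition~1 supplies the matching holomorphy of $t\mapsto f_t^+$ in $D_+(h,\varepsilon)\supset D$. The crucial observation is that even if the eigenvalues $\lambda_{\pm k,j}(t)$ collide at some singular quasimomenta in $D$ and the individual summands $a_{s,j}^+(t)\Psi_{s,j,t}(x)$ develop poles, their total sum $U_k^+(t,x)$ is holomorphic, because the defining contour $C(0)$ is fixed and avoids all spectrum.

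With holomorphy established, I would close the contour by applying Cauchy's theorem on $\overline{D}$, whose boundary consists of the semicircle $\gamma_+(0,h)$ together with the diameter $[-h,h]$. Since $U_k^+(t,x)$ is holomorphic on $D$ and continuous on $\overline{D}$, integrating around the boundary gives zero, i.e.
$$\int_{\gamma_+(0,h)}U_k^+(t,x)\,dt=\int_{[-h,h]}U_k^+(t,x)\,dt.$$
On the left the semicircle avoids every singular quasimomentum, so the finite sum can be pulled outside the integral; on the right the sum \emph{must} remain inside the integral because individual $a_{s,j}^+(t)\Psi_{s,j,t}(x)$ may be non-integrable on $[-h,h]$. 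This is precisely (77) in the $+$ case, and the $-$ case is identical after reflecting $D$ to the lower semidisk and replacing $f_t^+$ by $f_t^-$.

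The main obstacle will be the analyticity (and continuity up to the boundary) of $U_k^+(\cdot,x)$ in step two. Since the Gelfand transform $f_t^+$ is a priori only an $L_2^m$-valued map that is holomorphic in $t$ in the sense of Definition~1, one has to combine this with uniform bounds on the Green function on $C(0)$---which come from the lower bound $|\lambda-(2\pi ki+it)^n|\geq \tfrac{2}{3}k^{n-1}$ underlying Theorem~6---to upgrade the resolvent integral to a genuine pointwise analytic function of $(t,x)$. Once those bounds are in place the contour shift is routine, and the conclusion follows as sketched.
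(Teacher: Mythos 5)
Your proposal is correct and follows essentially the same route as the paper: the paper's proof of Proposition 1 consists precisely of replacing the circle $C(t_j)$ in the proof of Theorem 3 by the circle $C(0)$ and repeating that argument, i.e.\ representing the bracketed sum as the total projection $\int_{C(0)}\int_0^1 G(x,\xi,\lambda,t)f_t^{\pm}(\xi)\,d\xi\,d\lambda$, which is analytic in $t$ on the (upper or lower) semidisk because $C(0)$ stays in the resolvent set by Theorem 6, and then shifting the contour from $\gamma_{\pm}(0,h)$ to $[-h,h]$ by Cauchy's theorem. Your write-up merely makes explicit the steps the paper delegates to Theorem 3 and to Theorem 2.1 of [12].
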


Similarly, in the case $[\pi-h,\pi+h]$, instead of $C(0)$ taking the circle
\[
C(\pi):=\left\{  z\in\mathbb{C}:\left\vert z-(i(2k\pi+\pi))^{n}\right\vert
=k^{n-1}\right\}
\]
we obtain

\begin{proposition}
For each $k\geq N(\pi,h)$ the following equality holds.
\begin{equation}
\sum_{s=k,-k-1}^{{}}\sum_{j=1}^{m}\int\limits_{\gamma_{\pm}(\pi,h)}%
a_{s,j}^{\pm}(t)\Psi_{s,j,t}(x)dt=\int\limits_{[\pi-h,\pi+h]}\sum
_{s=k,-k-1}^{{}}\sum_{j=1}^{m}a_{s,j}^{\pm}(t)\Psi_{s,j,t}(x)dt. \tag{78}%
\end{equation}

\end{proposition}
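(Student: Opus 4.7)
The plan is to imitate the proof of Theorem 3 (and hence of Proposition 1) but replacing the circle $C(t_j)$ by the circle
\[
C(\pi)=\{z\in\mathbb{C}:\left\vert z-(i(2k\pi+\pi))^{n}\right\vert =k^{n-1}\}.
\]
By Theorem 7, the disk $\{|\lambda-(i2\pi k+i\pi)^n|<k^{n-1}\}$ lies in the resolvent set of $L_{t,z}$ along its boundary $C(\pi)$ for every $z\in[0,1]$ and every $t$ with $|t-\pi|\leq h$. Moreover, Theorem 8$(a)$ (in the form stated after (76)) tells us that exactly $2m$ eigenvalues of $L_t$ lie inside $C(\pi)$ for $|t-\pi|\leq h$, namely $\lambda_{k,j}(t)$ and $\lambda_{-k-1,j}(t)$ for $j=1,2,\dots,m$. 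These are precisely the indices $s=k,-k-1$ that appear in the sum (78).

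First I would define the total projection
\[
T(x,t):=\int_{C(\pi)}A(x,\lambda,t)\,d\lambda,\qquad A(x,\lambda,t):=\int_0^1 G(x,\xi,\lambda,t)f^{\pm}_t(\xi)\,d\xi,
\]
where $G$ is the Green function of $L_t$ given by formula (8) of [7, p.117]. Since $C(\pi)$ lies in the resolvent set of $L_t$ for all $t$ in the closed domain $\overline{D}$ bounded by $\gamma_{\pm}(\pi,h)$ and $[\pi-h,\pi+h]$, and since the Green function depends analytically on $t$ in $D$ (by Definition 1 this carries over to $A(x,\lambda,t)$ via $f^{\pm}_t$ being analytic in $t$), the function $t\mapsto T(x,t)$ is continuous on $\overline{D}$ and holomorphic in $D$.

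Next I would identify $T(x,t)$ with the spectral projection onto the root subspaces associated with the eigenvalues enclosed by $C(\pi)$. By the standard residue computation (see [7, Chap.~3]), this projection applied to $f^{\pm}_t$ yields precisely
\[
T(x,t)=\sum_{s=k,-k-1}\sum_{j=1}^{m}a^{\pm}_{s,j}(t)\Psi_{s,j,t}(x),
\]
for every $t\in\overline{D}$ at which the enclosed eigenvalues are simple; this is exactly the integrand of both sides of (78). Because $T(x,t)$ is holomorphic in $D$ and continuous up to the boundary, Cauchy's theorem gives
\[
\int_{\gamma_{\pm}(\pi,h)}T(x,t)\,dt=\int_{[\pi-h,\pi+h]}T(x,t)\,dt,
\]
which is the claimed equality (78).

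The main technical obstacle, as in the proof of Theorem 3, is not the contour deformation itself but justifying that the sum $\sum_{s,j}a^{\pm}_{s,j}(t)\Psi_{s,j,t}(x)$ coincides with $T(x,t)$ on the interval $[\pi-h,\pi+h]$ even at the singular quasimomenta, where individual terms may fail to be integrable: the total projection $T(x,t)$ remains finite and continuous there, so the apparent singularities of the individual summands must cancel. Once this is established by the residue-theoretic identification (valid at any $t$ where the operators have no eigenvalues on $C(\pi)$, which is all of $\overline{D}$), the rest of the argument is a verbatim repetition of the proof of Proposition 1 with $C(0)$ replaced by $C(\pi)$ and the index set $\{k,-k\}$ replaced by $\{k,-k-1\}$.
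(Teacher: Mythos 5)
Your proposal is correct and takes essentially the same approach as the paper: the paper obtains Proposition 2 simply by replacing the circle $C(0)$ of Proposition 1 with $C(\pi)$ and ``repeating the proof of Theorem 3,'' which is precisely the total-projection/Green-function/contour-deformation argument you spell out (with Theorems 7 and 8$(a)$ guaranteeing that $C(\pi)$ lies in the resolvent set and encloses exactly the $2m$ eigenvalues indexed by $s=k,-k-1$). You merely make explicit the details the paper leaves implicit by reference to Theorem 3 and to Theorem 2.1 of [12].
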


Now let us consider the terms $a_{k}^{\pm}(t)\Psi_{k,t}$ for $k\in
\mathbb{N}(0)$ and for $k\in\mathbb{N}(\pi)$. Instead of the curve $C(t_{j})$
using $\Gamma(0)$ and repeating the proof of Theorem 3 we obtain
\begin{equation}
\sum_{k\in\mathbb{N}(0)}\int\limits_{\gamma_{\pm}(0,h)}a_{k}^{\pm}%
(t)\Psi_{k,t}(x)dt=\int\limits_{[-h,h]}\sum_{k\in\mathbb{N}(0)}a_{k}^{\pm
}(t)\Psi_{k,t}(x)dt. \tag{79}%
\end{equation}
In the same way we get
\begin{equation}
\sum_{k\in\mathbb{N}(\pi)}\int\limits_{\gamma_{\pm}(\pi,h)}a_{k}^{\pm}%
(t)\Psi_{k,t}(x)dt=\int\limits_{[\pi-h,\pi+h]}\sum_{k\in\mathbb{N}(\pi)}%
a_{k}^{\pm}(t)\Psi_{k,t}(x)dt. \tag{80}%
\end{equation}

Now we consider the right-hand sides of (79) and (80). To do this recall that
(see the proof of Theorem 3) the set of the multiple Bloch eigenvalues are
either finite or countable set which has no finite limit point. Therefore
$\Gamma(0)$ encloses a finite number multiple eigenvalues $a_{1}%
,a_{2},...a_{s}$. For each $a_{k}$ there are $nm$ values $t_{k,1}%
,t_{k,2},...,t_{k,nm}$ of $t$ satisfying $\Delta(a_{k},t)=0.$ Let $\left\{
t_{v+1},t_{v+2},...,t_{u}\right\}  $ be the set of SQ such that $t_{j}%
\in(-h,h)$ for $j=v+1,v+2,...,u$ and the operator $L_{t_{j}}$ has a multiple
eigenvalue lying inside $\Gamma(0)$. It is clear that
\[
\left\{  t_{v+1},t_{v+2},...,t_{u}\right\}  \subset\left\{  t_{k,1}%
,t_{k,2},...,t_{k,nm}:k=1,2,...,s\right\}
\]
Let $\Lambda_{1}(t_{j}),$ $\Lambda_{2}(t_{j}),...,\Lambda_{u_{j}}(t_{j})$ be
the different multiple eigenvalues of the operator $L_{t_{j}}$ lying inside
$\Gamma(0)$, where $j=v+1,v+2,...,u.$ Introduce the set
\begin{equation}
\mathbb{T}(s,j):=\left\{  k\in\mathbb{N}(0):\lambda_{k}(t_{j})=\Lambda
_{s}(t_{j})\right\}  . \tag{81}%
\end{equation}
Let $\mathbb{B}(s,j)$ and $\mathbb{S}(s,j)$ be respectively the subset of
$\mathbb{T}(s,j)$ such that for $k\in\mathbb{B}(s,j)$ and $k\in\mathbb{S}%
(s,j)$ the function $\frac{1}{\alpha_{k}}$ is integrable and nonintegrable in
$(t_{j}-\varepsilon,t_{j}+\varepsilon)$. Then we have

\begin{theorem}
Let $\Lambda_{s}(t_{j})$ be a multiple eigenvalue and $f\in W,$ where
$j=v+1,v+2,...,u.$ Then%
\begin{equation}
\int\limits_{\lbrack-h,h]}\sum_{k\in\mathbb{N}(0)}a_{k}^{\pm}(t)\Psi
_{k,t}(x)dt=\sum_{k\in\mathbb{N}(0)}\int\limits_{r(0,h)}a_{k}^{\pm}%
(t)\Psi_{k,t}(x)dt+ \tag{82}%
\end{equation}%
\[
\sum_{j=v+1}^{u}\left(  F^{\pm}(s,j,x)+\sum_{k\in\mathbb{B}(s,j)}%
\int\limits_{(t_{j}-\varepsilon,t_{j}+\varepsilon)}a_{k}^{\pm}(t)\Psi
_{k,t}(x)dt\right)  ,
\]
where $r(0,h)=(-h,h)\backslash\left(  \bigcup_{j=v+1}^{u}(t_{j}-\varepsilon
,t_{j}+\varepsilon)\right)  $ and $F^{\pm}(s,j,x)$ is defined by (49).
\end{theorem}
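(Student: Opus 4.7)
The plan is to reduce this statement to Theorem 4 together with the argument behind (79), by carefully decomposing the integration domain $[-h,h]$ and partitioning the index set $\mathbb{N}(0)$ according to which eigenvalues of $L_{t_j}$ are multiple. Concretely, I would write
\[
[-h,h]=r(0,h)\cup\bigcup_{j=v+1}^{u}(t_j-\varepsilon,t_j+\varepsilon),
\]
where the union on the right is disjoint by construction of $\varepsilon$, and split the left-hand side of (82) as a sum of the integrals of $\sum_{k\in\mathbb{N}(0)}a_k^{\pm}(t)\Psi_{k,t}(x)$ over each of these pieces. Observe that $\mathbb{N}(0)$ is the finite set $\{1,2,\dots,K(0,h)\}$, so the interchange of the finite sum with any integral is automatic once integrability of each summand is established on the relevant piece.

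For the integral over $r(0,h)$, I would argue as in the proof of Theorem 4 (first displayed inequality there): on $r(0,h)$ no singular quasimomentum for an eigenvalue inside $\Gamma(0)$ occurs, so $1/\alpha_k$ is bounded on $r(0,h)$ for every $k\in\mathbb{N}(0)$; combining this with Lemma 2, which gives $|\Psi_{k,t}(x)|<c_{11}$ uniformly, and with the boundedness of $\|f_t^{\pm}\|$ on the compact set $r(0,h)$ from Definition 1, each $a_k^{\pm}(t)\Psi_{k,t}(x)$ is integrable on $r(0,h)$. Hence the finite sum commutes with the integral, yielding the first term on the right-hand side of (82).

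For each $j\in\{v+1,\dots,u\}$, I would partition $\mathbb{N}(0)$ as $\mathbb{K}(j)\cup\bigcup_{s=1}^{u_j}\mathbb{T}(s,j)$, where $\mathbb{K}(j)$ consists of those $k\in\mathbb{N}(0)$ for which $\lambda_k(t_j)$ is a simple eigenvalue of $L_{t_j}$ and $\mathbb{T}(s,j)$ is defined by (81). For $k\in\mathbb{K}(j)$ the argument of (45) (specialized with $\Gamma(0)$ in place of $\Gamma(t_j)$) gives that $a_k^{\pm}(t)\Psi_{k,t}$ is integrable on $(t_j-\varepsilon,t_j+\varepsilon)$, so these indices contribute integrable terms that combine with those from $r(0,h)$ to produce $\sum_{k\in\mathbb{N}(0)}\int_{r(0,h)}a_k^{\pm}(t)\Psi_{k,t}(x)\,dt$ together with integrable pieces over the small intervals. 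For the remaining indices, Theorem 4 applied with the present $t_j$ gives, for each $s$,
\[
\int_{(t_j-\varepsilon,t_j+\varepsilon)}\sum_{k\in\mathbb{T}(s,j)}a_k^{\pm}(t)\Psi_{k,t}(x)\,dt
=F^{\pm}(s,j,x)+\sum_{k\in\mathbb{B}(s,j)}\int_{(t_j-\varepsilon,t_j+\varepsilon)}a_k^{\pm}(t)\Psi_{k,t}(x)\,dt,
\]
where $F^{\pm}(s,j,x)$ is defined as in (49) and the singular part is accessed only through the sum, not through the individual terms. Adding these contributions over $s$ and $j$ produces the bracketed term in (82).

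The main obstacle is not analytic but organizational: one must verify that the partition $\mathbb{N}(0)=\mathbb{K}(j)\cup\bigcup_s\mathbb{T}(s,j)$ exhausts exactly the indices whose eigenvalues lie inside $\Gamma(0)$ and that the $\mathbb{S}(s,j)$ cancellation provided by Theorem 4 captures precisely the non-integrable terms, so that after reassembly nothing is double-counted and the integrable-versus-singular dichotomy matches the form of (82). Once the bookkeeping is set up, the identity follows directly from Theorem 4, (45), Lemma 2 and the finiteness of $\mathbb{N}(0)$; the convergence of the (finite) assemblage in the $L_2^m(a,b)$ norm is then immediate.
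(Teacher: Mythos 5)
Your proposal is correct and follows essentially the route the paper intends: the paper states this theorem without writing out a proof, presenting it as a consequence of (79), the decomposition of $[-h,h]$ into $r(0,h)$ and the intervals $(t_j-\varepsilon,t_j+\varepsilon)$ around the singular quasimomenta whose multiple eigenvalues lie inside $\Gamma(0)$, and the local argument of Theorems 3 and 4 transplanted to $\Gamma(0)$, which is exactly the bookkeeping you reconstruct (including the observation that the free index $s$ on the right of (82) must in effect be summed over the multiple eigenvalues of $L_{t_j}$ inside $\Gamma(0)$). One minor imprecision: on $r(0,h)$ you can only claim local integrability of $1/\alpha_k$ at each point (none being a singular quasimomentum for $k\in\mathbb{N}(0)$), which by compactness yields integrability on $r(0,h)$ — boundedness is more than is guaranteed, but integrability is all your argument actually uses.
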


In the same way we prove the following theorem by introducing the following
notation. Let $t_{u+1},t_{u+2},...,t_{p}$ be the SQ that lie in $(\pi
-h,\pi+h)$ and the operator $L_{t_{j}}$ has a multiple eigenvalue lying inside
$\Gamma(\pi)$.

\begin{theorem}
Let $\ \Lambda_{s}(t_{j})$ be a multiple eigenvalue and $f\in W,$ where
$j=u+1,u+2,...,p.$ Then%
\begin{equation}
\int\limits_{\lbrack\pi-h,\pi+h]}\sum_{k\in\mathbb{N}(\pi)}a_{k}^{\pm}%
(t)\Psi_{k,t}(x)dt=\sum_{k\in\mathbb{N}(\pi)}\int\limits_{r(\pi,h)}a_{k}^{\pm
}(t)\Psi_{k,t}(x)dt+ \tag{83}%
\end{equation}%
\[
\sum_{j=u+1}^{p}\left(  F^{\pm}(s,j,x)+\sum_{k\in\mathbb{B}(s,j)}%
\int\limits_{(t_{j}-\varepsilon,t_{j}+\varepsilon)}a_{k}^{\pm}(t)\Psi
_{k,t}(x)dt\right)  ,
\]
where $r(0,h)=(\pi-h,\pi+h)\backslash\left(  \bigcup_{j=u+1}^{p}%
(t_{j}-\varepsilon,t_{j}+\varepsilon)\right)  $ and $F^{\pm}(s,j,x)$ is
defined by (49).
\end{theorem}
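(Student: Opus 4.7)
The plan is to mirror the proof of Theorem 10 verbatim, with $0$ replaced by $\pi$, the circle $C(0)$ replaced by $C(\pi) = \{z \in \mathbb{C} : |z - (i(2k\pi+\pi))^n| = k^{n-1}\}$, and the enclosing curve $\Gamma(0)$ replaced by $\Gamma(\pi)$ provided by Theorem 8. Indeed, the two statements have the same logical shape: the integral over $[\pi-h,\pi+h]$ of the finite sum $\sum_{k\in\mathbb{N}(\pi)} a_k^{\pm}(t)\Psi_{k,t}(x)$ must be decomposed into (i) a piece over the regular part $r(\pi,h)$, where all terms are individually integrable, and (ii) local contributions near each singular quasimomentum $t_j \in (\pi-h,\pi+h)$, where the nonintegrable summands must be grouped into brackets.

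First I would split the interval as $[\pi-h,\pi+h] = r(\pi,h) \cup \bigl(\bigcup_{j=u+1}^{p}(t_j-\varepsilon,t_j+\varepsilon)\bigr)$. On $r(\pi,h)$ every eigenvalue $\lambda_k(t)$ with $k\in\mathbb{N}(\pi)$ is simple (all SQ inside $\Gamma(\pi)$ have been removed), so $\alpha_k(t)\neq 0$ there and each $a_k^{\pm}(t)\Psi_{k,t}(x)$ is integrable; hence the finite sum can be integrated term by term, producing the first sum on the right-hand side of (83).

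Next, near each $t_j$ for $j=u+1,\dots,p$, I apply Theorem 4 (whose proof used only that the spectral projector $T(x,t) = \int_{C(t_j)} A(x,\lambda,t)\,d\lambda$ is well-defined, holomorphic in $t$ off $t_j$, and that formula (51) holds via Lemma 2). Here I replace $C(t_j)$ by the circle $C(\pi)$ surrounding the group of eigenvalues lying in $\Gamma(\pi)$ that coincide with $\Lambda_s(t_j)$. The implicit-function argument from Theorem 3 applied to the characteristic determinant $\Delta(\lambda,t)$ then guarantees that the multiple eigenvalue $\Lambda_s(t_j)$ splits into simple eigenvalues $\lambda_k(t)$, $k\in\mathbb{T}(s,j)$, for $0<|t-t_j|<\varepsilon$, and that their total projection is analytic in $t$. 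This yields, as in (48)--(49), the identity
\[
\sum_{k\in\mathbb{T}(s,j)}\int_{\gamma_{\pm}(t_j,\varepsilon)} a_k^{\pm}(t)\Psi_{k,t}(x)\,dt
= F^{\pm}(s,j,x) + \sum_{k\in\mathbb{B}(s,j)} \int_{(t_j-\varepsilon,t_j+\varepsilon)} a_k^{\pm}(t)\Psi_{k,t}(x)\,dt,
\]
with the bracketed sum understood in the principal-value sense.

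Combining these contributions with (80), which already allows us to replace the semicircular integration $\gamma_{\pm}(\pi,h)$ by the real integration over $[\pi-h,\pi+h]$ on the total sum $\sum_{k\in\mathbb{N}(\pi)} a_k^{\pm}(t)\Psi_{k,t}(x)$, yields (83). The only genuine obstacle is verifying the hypotheses of Theorem 4 in the present setting, namely that each SQ $t_j \in (\pi-h,\pi+h)$ is approached by a holomorphic family of simple eigenvalues whose total projection stays uniformly bounded; this is exactly what Theorems 7 and 8 provide via the curve $\Gamma(\pi)$ and the invariance of the number of enclosed eigenvalues under the holomorphic family $L_{t,z}$. Everything else is a bookkeeping translation of the proof of Theorem 10 from $0$ to $\pi$.
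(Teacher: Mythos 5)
Your proposal matches the paper's own treatment: the paper proves Theorem 12 simply by declaring it is obtained ``in the same way'' as Theorem 11 (the $0$-case), i.e., by splitting $[\pi-h,\pi+h]$ into $r(\pi,h)$ and the $\varepsilon$-neighborhoods of the singular quasimomenta, invoking the bracketed decomposition (48)--(49) near each $t_j$, and using (80) with $\Gamma(\pi)$ in place of $\Gamma(0)$ --- exactly the translation you carry out. Your version is, if anything, more explicit than the paper's about which hypotheses (Theorems 7 and 8, the holomorphy of $L_{t,z}$) are being reused.
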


Now using (77), (82) and Remark 1 and arguing as in the proof of Theorem 5 we obtain

\begin{theorem}
Let $f\in W$. \ Then the following equality holds
\begin{equation}
\int\limits_{(-h,h)}f_{t}(x)dt=\sum\limits_{k\geq N(0,h)}\int\limits_{[-h,h]}%
\left[  \sum_{s=\pm k}^{{}}\sum_{j=1}^{m}a_{s,j}(t)\Psi_{s,j,t}(x)\right]
dt+\sum_{k\in\mathbb{N}(0)}\int\limits_{r(0,h)}a_{k}(t)\Psi_{k,t}(x)dt+
\tag{84}%
\end{equation}%
\[
\sum_{j=v+1}^{u}\left(  \sum_{k\in\mathbb{B}(s,j)}\int\limits_{(t_{j}%
-\varepsilon,t_{j}+\varepsilon)}a_{k}(t)\Psi_{k,t}(x)dt+\int\limits_{(t_{j}%
-\varepsilon,t_{j}+\varepsilon)}\left[  \sum_{k\in\mathbb{S}(s,j)}a_{k}%
(t)\Psi_{k,t}\right]  dt\right)  .
\]
The series in (84) converge in the $L_{2}^{m}(a,b)$ norm for any
$a,b\in\mathbb{R}$ and (55) holds.
\end{theorem}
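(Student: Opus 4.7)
The plan is to mirror the proof of Theorem 5 (which established (54) for the SQ intervals $(t_j-\varepsilon,t_j+\varepsilon)$ inside $B(h)$), replacing the finite-SQ machinery used there by the decomposition of the disk $\{|t|\le h\}$ supplied by Theorems 6-8 and Notation 2, and by the conversion identities (77) and (79) and equation (82) in place of (45) and (48). Concretely, I would first start from $f_t=f_t^++f_t^-$ (Definition 1) and, exactly as in the derivation of (56), combine the analyticity of $f_t^\pm$ in $D_\pm(h,\varepsilon)$ with Cauchy's theorem to get
\[
\int_{(-h,h)} f_t(x)\,dt \;=\; \int_{\gamma_+(0,h)} f_t^+(x)\,dt + \int_{\gamma_-(0,h)} f_t^-(x)\,dt.
\]
By Remark 1 the series (10a) and (10b) can be integrated term-by-term over $\gamma_+(0,h)$ and $\gamma_-(0,h)$ respectively. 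Rewriting these expansions in the refined indexing of Notation 2, so that the root functions of $L_t$ for $|t|\le h$ are partitioned into the $\Psi_{k,t}$ with $k\in\mathbb{N}(0)$ and the $\Psi_{\pm k,j,t}$ with $k\ge N(0,h)$, $j=1,\dots,m$, expresses $\int_{\gamma_\pm(0,h)}f_t^\pm\,dt$ as the corresponding sum of integrals of $a_k^\pm\Psi_{k,t}$ and $a_{\pm k,j}^\pm\Psi_{\pm k,j,t}$.

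Next, for each fixed $k\ge N(0,h)$ I would apply (77) block-wise in $s=\pm k$, $j=1,\dots,m$ to fuse the two semicircle integrals into a single bracketed integral over $[-h,h]$; the identity $a_{s,j}^++a_{s,j}^-=a_{s,j}$ then produces the first sum on the right-hand side of (84). For the finite block $k\in\mathbb{N}(0)$, equation (79) plays the analogous role and converts the pair of semicircle integrals into $\int_{[-h,h]}\sum_{k\in\mathbb{N}(0)}a_k(t)\Psi_{k,t}\,dt$. Finally I would invoke (82) and its $-$ analog to split this last integral as $\int_{r(0,h)}\sum_{k\in\mathbb{N}(0)}a_k\Psi_{k,t}\,dt$ plus contributions over the disjoint SQ neighbourhoods $(t_j-\varepsilon,t_j+\varepsilon)$, $j=v+1,\dots,u$; inside each such neighbourhood the $\mathbb{B}(s,j)$-indexed terms remain as ordinary integrals while the $\mathbb{S}(s,j)$-indexed terms appear under brackets, since $F^+(s,j,x)+F^-(s,j,x)$ is exactly the principal-value object in (55) written for $a_k=a_k^++a_k^-$. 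The $L_2^m(a,b)$-convergence of the outer series follows from Lemma 1 applied to $f_t^\pm$ on $\gamma_\pm(0,h)$, exactly as (39) was used in the proof of (34).

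The main obstacle is bookkeeping rather than any single analytic identity. One must simultaneously (i) resum the Summary 1 Bloch expansion into the coarser $\mathbb{N}(0)$ vs.\ $k\ge N(0,h)$ indexing of Notation 2, while preserving the term-by-term integrability from Remark 1; this is harmless because only finitely many indices in the range $N(h)\le|k|<N(0,h)$ get relocated into $\mathbb{N}(0)$. One must (ii) pair the $+$ and $-$ halves consistently throughout so that $a_{s,j}^\pm$ recombines to $a_{s,j}$ and the two principal-value limits $F^\pm(s,j,x)$ recombine to the single principal value appearing in (55). And one must (iii) deploy the three conversion identities (77), (79), (82) on their correct blocks without double-counting the SQ neighbourhoods that already lie inside $r(0,h)$. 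Once this accounting is in place the proof reduces to the routine assembly of already-established pieces.
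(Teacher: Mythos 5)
Your proposal is correct and follows essentially the same route as the paper, which derives (84) precisely by combining (77), (79), (82) and Remark 1 and repeating the argument of Theorem 5 (decompose $f_t=f_t^++f_t^-$, pass to the semicircles $\gamma_\pm(0,h)$, integrate term by term, then convert each block back to the real interval). Your extra bookkeeping remarks about the reindexing into $\mathbb{N}(0)$ versus $k\ge N(0,h)$ and the recombination $F^+(s,j,x)+F^-(s,j,x)$ fill in details the paper leaves implicit, but the underlying argument is the same.
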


In the same way from (78) and (83) we obtain

\begin{theorem}
Let $f\in E.$ Then the following equality holds
\begin{equation}
\int\limits_{(\pi-h,\pi+h)}f_{t}(x)dt=\sum\limits_{k\geq N(\pi,h)}%
\int\limits_{[\pi-h,\pi+h]}\left[  \sum_{s=k,-k-1}^{{}}\sum_{j=1}^{m}%
a_{s,j}(t)\Psi_{s,j,t}(x)\right]  dt+\sum_{k\in\mathbb{N}(\pi)}\int
\limits_{r(\pi,h)}a_{k}(t)\Psi_{k,t}(x)dt+ \tag{85}%
\end{equation}%
\[
\sum_{j=u+1}^{p}\left(  \sum_{k\in\mathbb{B}(s,j)}\int\limits_{(t_{j}%
-\varepsilon,t_{j}+\varepsilon)}a_{k}(t)\Psi_{k,t}(x)dt+\int\limits_{(t_{j}%
-\varepsilon,t_{j}+\varepsilon)}\left[  \sum_{k\in\mathbb{S}(s,j)}a_{k}%
(t)\Psi_{k,t}\right]  dt\right)  .
\]
The series in (85) converge in the $L_{2}^{m}(a,b)$ norm for any
$a,b\in\mathbb{R}$ and (55) holds
\end{theorem}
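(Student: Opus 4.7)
My plan is to imitate the proof of Theorem~11 step by step, substituting the spectral data at $t=\pi$ for the data at $t=0$. The three ingredients used there --- Remark~1, Proposition~1 (equation (77)) and Theorem~9 (equation (82)) --- all have direct counterparts: Remark~1 applies equally to the semicircles $\gamma_\pm(\pi,h)$; Proposition~2 (equation (78)) replaces Proposition~1; and Theorem~10 (equation (83)) replaces Theorem~9. The bookkeeping of eigenvalues is provided by Notation~2 (the families (76) indexed by the pairs $k,-k-1$ and the finite-index set $\mathbb{N}(\pi)$).

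First, using Definition~1 together with the analyticity of $f_t^\pm$ inside $D_\pm(h,\varepsilon)$ (cf.\ (14) and equation (56) in the proof of Theorem~5), I would write
\[
\int\limits_{(\pi-h,\pi+h)} f_t(x)\,dt = \int\limits_{\gamma_+(\pi,h)} f_t^+(x)\,dt + \int\limits_{\gamma_-(\pi,h)} f_t^-(x)\,dt.
\]
Remark~1 then permits term-by-term integration of the Bloch expansions (10a) and (10b) over each semicircle $\gamma_\pm(\pi,h)$, producing a finite sum indexed by $k\in\mathbb{N}(\pi)$ and an infinite series indexed by $|k|\geq N(\pi,h)$, $j=1,\ldots,m$. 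Next I would convert the contour to the real line: for $k\geq N(\pi,h)$ formula (78) replaces the sum of the $\gamma_+(\pi,h)$ and $\gamma_-(\pi,h)$ integrals with indices $s=k,-k-1$, $j=1,\ldots,m$ by a real-line integral of the bracketed expression over $[\pi-h,\pi+h]$, which is precisely the first summand on the right-hand side of (85); for $k\in\mathbb{N}(\pi)$ formula (83) decomposes the finite part into an integral over the regular set $r(\pi,h)$, ordinary integrals of $a_k(t)\Psi_{k,t}$ over each singular window $(t_j-\varepsilon,t_j+\varepsilon)$ for $k\in\mathbb{B}(s,j)$, and the bracketed principal-limit integrals (49) for $k\in\mathbb{S}(s,j)$.

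Finally I add the $+$ and $-$ contributions and use $a_k^+(t)+a_k^-(t)=a_k(t)$ together with the analogous identity for $a_{k,j}$ to merge the separate $+$/$-$ brackets into the single bracketed expressions appearing in (85); formula (55) records the resulting symmetric limit. The $L_2^m(a,b)$-convergence of the outer series is inherited from the Bessel-type estimate of Lemma~1 exactly as in the proof of Theorem~5.

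The main obstacle is not computational but structural: one has to verify that the cancellations responsible for the integrability of the bracketed sums --- both the paired-index bracket $\sum_{s=k,-k-1}\sum_j$ for large $|k|$ and the bracket $\sum_{k\in\mathbb{S}(s,j)}$ at each singular quasimomentum in $(\pi-h,\pi+h)$ --- are preserved when the $+$ and $-$ pieces are recombined. This is exactly what (78) and Theorem~10 encode, via the total-projection argument of Theorem~3 applied to the circle $C(\pi)$ and the curve $\Gamma(\pi)$. Once these are in hand, no additional estimate beyond those already established is needed, and (85) follows by collecting the terms in the order indicated there.
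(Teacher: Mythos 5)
Your proposal is correct and follows essentially the same route as the paper, which itself derives (85) by the remark ``in the same way from (78) and (83)'' as Theorem 11, i.e.\ by splitting $\int_{(\pi-h,\pi+h)}f_t\,dt$ over the semicircles $\gamma_{\pm}(\pi,h)$, integrating (10a)--(10b) term by term via Remark 1, and then deforming to the real line using (78), (80) and (83) before recombining the $+$ and $-$ pieces. The only cosmetic point is that the passage from the contour to $[\pi-h,\pi+h]$ for the finitely many indices $k\in\mathbb{N}(\pi)$ is accomplished by (80) and only then decomposed by (83), a step you fold into one; this does not affect the argument.
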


Now using (41), Corollary 1 and Theorems 5, 11,12 we obtain

\begin{theorem}
For each $f\in W$ the spectral expansion defined by (41), (40), (54), (84) and
(85) holds.
\end{theorem}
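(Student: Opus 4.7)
The plan is to assemble the spectral expansion by applying the already established decompositions (40), (54), (84), (85) to the four summands on the right-hand side of (41). By (8) and the $2\pi$-periodicity of $L_{t}$ in $t$ we may write
\[
f(x)=\frac{1}{2\pi}\int\limits_{(-h,2\pi-h]}f_{t}(x)\,dt,
\]
and since
\[
\Bigl(\bigcup_{j=1}^{v(h)}(t_{j}-\varepsilon,t_{j}+\varepsilon)\Bigr)\cup r(h,\varepsilon(h))\cup(-h,h)\cup(\pi-h,\pi+h)=(-h,2\pi-h],
\]
with the sets on the left being pairwise disjoint (this is guaranteed by the choice $\varepsilon(h)<h/v(h)$ and by the separation of $0,\pi$ from $t_{1},\dots,t_{v(h)}$), equality (41) holds. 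Thus the task reduces to producing a spectral expansion for each of the four summands in (41).

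Next I would treat the four summands one by one. For the first summand, apply Corollary 1: this gives (40) and identifies $\frac{1}{2\pi}\int_{r(h,\varepsilon(h))}f_{t}\,dt$ with the corresponding double sum of Bloch integrals, convergent in $L_{2}^{m}(a,b)$ for every bounded interval $(a,b)$. For each of the integrals $\int_{(t_{j}-\varepsilon,t_{j}+\varepsilon)}f_{t}(x)\,dt$ with $j=1,\dots,v(h)$, apply Theorem 5 to get (54), which packages the (possibly nonintegrable) contributions of ESS into the bracket expressions $\sum_{k\in\mathbb{S}(s,j)}a_{k}(t)\Psi_{k,t}$, interpreted by the principal-value formula (55). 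For the integral over $(-h,h)$, apply Theorem 11 to obtain (84), and analogously apply Theorem 12 to the integral over $(\pi-h,\pi+h)$ to obtain (85). Each of these theorems already carries the statement that the corresponding series converges in $L_{2}^{m}(a,b)$ for every $a,b\in\mathbb{R}$.

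Summing the four resulting expansions and dividing by $2\pi$ yields, by virtue of (41), the required spectral representation of $f$. Because each of (40), (54), (84), (85) converges in $L_{2}^{m}(a,b)$ for every bounded interval, the finite sum of these four identities also converges in the same sense, so no additional convergence argument is needed. The only thing to check explicitly is that the index sets appearing in the four pieces are disjoint (so that no Bloch eigenfunction is counted twice): the indices for (40) are tied to $t\in r(h,\varepsilon(h))$, those for (54) are tied to the disks around $t_{1},\dots,t_{v(h)}$, and those for (84), (85) are tied to the disks around $0$ and $\pi$ via Notation 2; the partition of $(-h,2\pi-h]$ into the four pairwise disjoint sets makes this transparent.

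The main obstacle in carrying the plan out is purely bookkeeping rather than analytic: one must make sure that the indexing conventions of Notation 2 (which re-label the eigenvalues of $L_{t}$ for $|t|\le h$ and $|t-\pi|\le h$) are consistent with the indexing used in Summary 1 on the interior intervals, so that the various Bloch integrals glue together correctly across the boundaries of the four pieces of $(-h,2\pi-h]$. Once this identification is made, the theorem is an immediate consequence of (41) and the four preceding expansion theorems, with the understanding that every bracketed sum is interpreted in the sense of (55).
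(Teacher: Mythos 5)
Your proposal is correct and follows essentially the same route as the paper, which obtains Theorem 13 simply by combining the partition (41) of $(-h,2\pi-h]$ with Corollary 1 (for $r(h,\varepsilon(h))$), Theorem 5 (for the intervals about $t_{1},\dots,t_{v(h)}$), and Theorems 11 and 12 (for $(-h,h)$ and $(\pi-h,\pi+h)$). The additional remarks you make about disjointness of the four pieces and consistency of the indexing in Notation 2 are sensible bookkeeping but do not change the argument.
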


\section{Conclusion}

In this section, we explain why we need to use square brackets in the spectral
expansion formulas (54), (84), and (85) for $L$. We also show that the number
of terms within these square brackets is minimal. First, consider the square
brackets in the formulas (54), (84) and (85) which contain the terms with
indices $k\in\mathbb{S}(s,j).$ By the definition of $\mathbb{S}(s,j)$ the
terms $a_{k}(t)\Psi_{k,t}$ \ for $k\in\mathbb{S}(s,j)$ are not integrable over
$(t_{j}-\varepsilon,t_{j}+\varepsilon),$ while their sum is integrable.
Moreover, in the general case, it is impossible to divide the set
$\mathbb{S}(s,j)$ into two subsets $\mathbb{S}^{\prime}(s,j)$ and
$\mathbb{S}(s,j)\backslash\mathbb{S}^{\prime}(s,j)$ so that the sums of the
terms $a_{k}(t)\Psi_{k,t}$ \ for $k\in$ $\mathbb{S}^{\prime}(s,j)$ and for
$k\in\mathbb{S}(s,j)\backslash\mathbb{S}^{\prime}(s,j)$ are integrable due to
the following. The terms $a_{k}(t)\Psi_{k,t}$ \ for all $k\in\mathbb{S}(s,j)$
correspond to the same ESS $\Lambda_{s}(t_{j})$ and the number of terms in
$\mathbb{S}(s,j)$ is less or equal to the multiplicity of the eigenvalue
$\Lambda_{s}(t_{j}).$ If the multiplicity of this eigenvalue is two or three,
then such a division is not impossible, since at least one of $\mathbb{S}%
^{\prime}(s,j)$ and $\mathbb{S}(s,j)\backslash\mathbb{S}^{\prime}(s,j)$
contains only one index $k$ for which $a_{k}(t)\Psi_{k,t}$ is not integrable
over $(t_{j}-\varepsilon,t_{j}+\varepsilon).$ Thus in the prevalent cases such
a division is not impossible. Maybe in the other cases such a division is
possible, however we can not determine the multiplicity of the eigenvalue
$\Lambda_{s}(t_{j}),$ since this is a small eigenvalue. Therefore, we compound
together the nonintegrable terms $a_{k}(t)\Psi_{k,t}$ \ for all $k\in
\mathbb{S}(s,j)$.

Now we consider the square brackets in (84) which compound the terms
$a_{s,j}(t)\Psi_{s,j,t}(x)$ with indices $(s,j)$ from the set $\mathbb{S}%
(k)=\left\{  \left(  \pm k,1\right)  ,(\pm k,2),...,(\pm k,m)\right\}  $ by
using the simplest case $L(C).$ Consideration the square brackets in (85)
which compound the terms with indices $(s,j)$ from the set $\left\{  \left(
k,1\right)  ,(k,2),...,(k,m),\left(  -k-1,1\right)  ,\left(  -k-1,2\right)
,...,\left(  -k-1,m\right)  \right\}  $ is the same. The eigenvalue
\[
\mu_{k,j}(t)=\left(  2\pi ki+ti\right)  ^{n}+\mu_{j}\left(  2\pi ki+ti\right)
^{n-2}%
\]
of $L_{t}(C)$ coincides with the eigenvalue $\mu_{-k,s}(t)$ if
\[
\left(  2\pi ki+ti\right)  ^{n}+\mu_{j}\left(  2\pi ki+ti\right)
^{n-2}=\left(  -2\pi ki+ti\right)  ^{n}+\mu_{s}\left(  -2\pi ki+ti\right)
^{n-2}.
\]
Solving this equality for the case $n=2$ we see that if $t=\frac{\mu_{j}%
-\mu_{s}}{4\pi(2k)}$, then $\mu_{k,j}(t)$ is a multiple eigenvalue. If $k$ is
a large number then $t$ belong to the neighborhood of $0.$ Similarly if
\[
t=\pi+\frac{\mu_{j}-\mu_{s}}{4\pi(2k+1)}%
\]
then the eigenvalue $\mu_{-k-1,s}(t)$ coincides with $\mu_{k,s}(t)$ and $t$
belong to the neighborhood of $\pi\ $if $k$ is a large number. This shows that
the eigenvalues $\lambda_{\pm k,1}(t),$ $\lambda_{\pm k,2}(t),...,\lambda_{\pm
k,m}(t)$ of the operator of the perturbed operator $L_{t}$ for some values of
$t\in(-h,h)$ may become ESS. Moreover, in the general case, the set
$\mathbb{S}(k)=\left\{  \left(  \pm k,1\right)  ,(\pm k,2),...,(\pm
k,m)\right\}  $ can not be divided into two disjoint subset $\mathbb{S}%
^{\prime}(k)$ and $\mathbb{S}(k)\backslash\mathbb{S}^{\prime}(k)$ such that
\[%
{\displaystyle\sum\limits_{(s,j)\in\mathbb{S}^{\prime}}}
a_{s,j}(t)\Psi_{s,j,t}(x)\text{ }\And\text{ }%
{\displaystyle\sum\limits_{s,j\in\mathbb{S}\backslash\mathbb{S}^{\prime}}}
a_{s,j}(t)\Psi_{s,j,t}(x)
\]
are integrable over $(-h,h)$, due to the following. For any proper subset
$\mathbb{S}^{\prime}(k)$ of $\mathbb{S}(k)$, it may exist $\left(  s,j\right)
\in\mathbb{S}^{\prime}(k)$ and $\left(  p,i\right)  \in\mathbb{S}%
(k)\backslash\mathbb{S}^{\prime}(k)$ such that the equality $\lambda
_{s,j}(t)=\lambda_{p,i}(t)$ holds for some value of $t_{0}\in(-h,h)$. In
addition $\lambda_{s,j}(t_{0})$ may become ESS and the expressions
$a_{s,j}(t)\Psi_{s,j,t}(x)$ and $a_{p,i}(t)\Psi_{p,i,t}(x)$ may become
nonintegrable on some neighborhood of $t_{0},$ while their sum is integrable.
Therefore we compound together the terms $a_{s,j}(t)\Psi_{s,j,t}(x)$ for all
$\left(  s,j\right)  \in\mathbb{S}$ in the spectral expansion (84) of
$\int\limits_{(-h,h)}f_{t}(x)dt.$

Another reason for using the square bracket (84) which compound together the
terms $a_{s,j}(t)\Psi_{s,j,t}(x)$ with indices $(s,j)$ from the set
$\mathbb{S}(k)$ is connected for the convergence of the series in (84). It is
possible that the integral
\[
\int\limits_{(-h,h)}a_{s,j_{s}}(t)\Psi_{s,j_{s},t}(x)dt\text{ }%
\]
exists, however its $L_{2}^{m}(a,b)$ norm does not approach zero as
$s\rightarrow\infty$. Therefore the series (84) does not converge in the norm
of $L_{2}^{m}(a,b)$ without square bracket. This is connected with the ESS at
infinity defined as follows.

\begin{definition}
We say that the operator $L$ has ESS at infinity if there exist sequence of
integers $\left(  k_{s},j_{s}\right)  $ and sequence of closed subsets $I(s)$
of $[0,2\pi)$ such that
\[
\lim_{s\rightarrow\infty}\int\nolimits_{I(s)}\left\vert \alpha_{k_{s},j_{s}%
}(t)\right\vert ^{-1}dt=\infty.
\]

\end{definition}

Hence, one can obtain a spectral expansion without parenthesis if and only if
$L$ has no ESS (or equivalently has no SQ) and ESS at infinity. This implies
that, in the general case, it is necessary to use the parenthesis, since the
operator $L$ has ESS and ESS at infinity. In fact the detailed analysis (see
[2] and [13, Section 2.7]) of the simplest case $m=1$ and $n=2$ shows that the
Hill's operator, in the general case, has spectral singularity, ESS and ESS at
infinity. Thus the nature of the spectral expansion problem for the operator
$L$ requires the use of parenthesis.

\end{document}